\def\crr{\cr\noalign{\vskip2mm}}
\newcommand{\etal}{\textit{et al.~}}
\newtheorem{remark}{Remark}[section]
\newcommand{\curly}[1]{\left\{#1\right\}}
\def\m{\mu}
\def\a{\alpha}
\def\g{\gamma}
\def\l{\lambda}
\def\cI{{\cal I}}
\def\cR{{\cal R}}
\def\Re {\cR e }
\def\Im {\cI m }
\definecolor{darkgreen}{RGB}{10,160,20}
\title{\bf Stability and Optimal Decay Rates for Abstract Systems with Thermal Damping of Cattaneo's Type	\thanks{CHXD was supported by the China Scholarship Council, ZJH was supported by the National Natural Science Foundation of China (grants No. 62073236). QZ was supported by the National Natural Science Foundation of China (grants No. 12271035, 12131008) and Beijing Municipal Natural Science Foundation (grant No. 1232018).}}
\author{
		Chenxi Deng\thanks{School of Mathematics and Statistics,
			Beijing Institute of Technology, Beijing 100081, China (email: chenxideng@bit.edu.cn).}
		\and Zhong-Jie Han\thanks{School of Mathematics, BIIT Lab, Tianjin University,
				Tianjin 300354,   China (email: zjhan@tju.edu.cn)}
		\and Zhaobin Kuang\thanks{Computer Science Department, Stanford University, Stanford 94305, U.S.A.(email: zhaobin.kuang@gmail.com).}\and
		Qiong Zhang\thanks{Corresponding Author.  School of Mathematics and Statistics, Beijing Key Laboratory on MCAACI,
			Beijing Institute of Technology, Beijing 100081, China (email: zhangqiong@bit.edu.cn)}
	}
\begin{document}
		
\maketitle

\begin{abstract}
This paper studies the stability of an abstract thermoelastic system with Cattaneo's law, which describes finite heat propagation speed in a medium.  We introduce a region of parameters containing coupling, thermal dissipation, and possible inertial characteristics.
The region is partitioned into distinct subregions based on the spectral properties of the infinitesimal generator of the corresponding semigroup. By a careful estimation of the resolvent operator on the imaginary axis, we obtain distinct polynomial decay rates for systems with parameters located in different subregions. Furthermore, the optimality of these decay rates is proved. Finally, we apply our results to several coupled systems of partial differential equations.
\end{abstract}
	
	\begin{keywords}
Thermoelastic system, Cattaneo's law, inertial term, polynomial stability.	\end{keywords}
	
	\begin{AMS}
35Q74, 74F05.
	\end{AMS}
	
	\pagestyle{myheadings}
	\thispagestyle{plain}
	\markboth{C. DENG, Z.J. HAN, Z. KUANG AND Q. ZHANG}{  Stabilization for  Elastic Systems with Thermal Damping of Cattaneo's Type}

	\section{Introduction}\label{sec:intro}
\setcounter{equation}{0}
In this paper, we consider an abstract thermoelastic system that describes the interaction between heat conduction and material deformation in solids. The model is defined as follows:
\begin{equation}\label{101}
\left\{
\begin{array}{l}
u_{tt} +mA^\gamma  u_{tt} +\sigma A u -A^\alpha\theta =0,  \\ \noalign{\medskip}  \displaystyle
\theta_t -A^{\frac{\beta}{2}}q +A^\alpha u_t =0,  \\ \noalign{\medskip}  \displaystyle
\tau q_t+q+A^{\frac{\beta}{2}}\theta=0,  \\ \noalign{\medskip}  \displaystyle
u(0)= u_0, \;\; u_t(0) = u_1,\;\; \theta(0) =\theta_0,\;\; q(0) = q_0,
\end{array} \right.
\end{equation}
where $A$ is a self-adjoint, positive definite operator with compact resolvent on the complex Hilbert space $H$.
The constants $m\geq 0, \; \sigma>0,\; \tau>0 $  are the inertial term,  wave speed, and the relaxation parameter,  respectively.  We define a region of parameters by $    E :=\{(\alpha,\;\beta,\;\gamma)\,|\,(\alpha,\;\beta,\;\gamma)  \in [0,1]\times[0,1]\times (0,1]\}$, where $\alpha,\beta,\gamma$ describe the coupling, thermal damping, and inertial characteristics.  
The case where $\gamma=0$ is omitted here because it can be included in the case where $m=0$.  Throughout this paper, we use $(\cdot,\,\cdot)$ and $\|\cdot\|$  to denote the inner product and norm in  $H$, respectively.

  If $\tau = 0$, the second and third equations in \eqref{101} reduce to an abstract system that covers the classical heat
equations subject to Fourier's law.   However, Fourier's law implies that all disturbances propagate at infinite speed, which is unacceptable in some physical processes such as high-frequency thermal phenomena and microscale heat conduction (\cite{Chand, Jou}).
Because of this shortcoming, various non-Fourier heat flux laws (e.g., Cattaneo's law) have been developed since the 1940s.
In Cattaneo's theory (\cite{Cattaneo,vernotte}),  a thermal relaxation parameter $\tau>0$ is introduced. This resolves the paradox of infinite speed of heat transfer in Fourier's law and characterizes the wave-like motion of heat, also referred to as the second sound in physics. Therefore, we say the system \eqref{101} follows  Fourier's law (or with thermal damping of Fourier's type) if $\tau=0$ and follows Cattaneo's law  (or with thermal damping of Cattaneo's type) if $\tau\neq0$, respectively.

On the other hand, the  natural energy of system \eqref{101} is defined by
\begin{equation*}\label{diss}
\mathcal{E}(t):=\frac{1}{2}\big(\|u_t\|^2+m\|A^{\frac{\gamma}{2}}u_t\|^2+\sigma\|A^{\frac{1}{2}} u\|^2+\|\theta\|^2+\tau\|q\|^2\big).
\end{equation*}
A direct computation gives
\begin{equation*}
{ \mathrm{d} \over \mathrm{d} t} \mathcal{E}(t)=-\|q\|^2 \leq 0,
\end{equation*}
which implies that the energy is decreasing on $[0,\infty)$.
Consequently, one natural question is to understand the asymptotic behavior (i.e., the stability when t tends to $\infty$) of the dissipative system \eqref{101}.

The stability of abstract thermoelastic systems can be traced back to the early 1990s. In 1993, Russell (\cite{russell}) introduced an abstract framework of an indirectly damped system, i.e., a conservative system coupled with a directly damped system. This is different from directly damped systems which are obtained by inserting dissipative terms directly into the originally conservative system. Furthermore, Russell claimed that, ``{\it at the present time, it does not appear possible to give a result for indirect damping mechanisms which even approaches the known direct damping results just listed in mathematical generality." }

 Inspired by Russell, research along this direction started, first with the system \eqref{101} when $\tau=0$, motivated by thermal elastic equations subject to Fourier's law. 
In 1996, Rivera and Racke (\cite{rivera1})  studied the regularity and exponential stability of the system \eqref{101} with $\tau=m=0$; They also provided specific examples of that system. Later,  Ammar-Khodja \emph{et al.} (\cite{Ammar}) identified the parameters regions for exponential stability of system \eqref{101} when $\tau=m=0$.
 Extending these results,  Hao and Liu (\cite{haoliu1}) obtained the optimal polynomial stability of the system beyond the regions of exponential stability presented in \cite{Ammar}. Further details on the case $\tau=m=0$ can be found in \cite{Avalos,conti,Kim,Lasiecka,LiuLiu} and the references therein.
Regarding the case where $\tau=0$ and $m\ne0$, Dell'Oro \emph{et al.} (\cite{rivera2}) investigated the stability of system \eqref{101} under the conditions  $\beta=\gamma=\frac{1}{2}$ and $0\leq\alpha\leq\frac{3}{4}$. To the best of our knowledge, this was the first result considering the inertial term.  Later, Fern\'{a}ndez Sare \emph{et al.} 
(\cite{liure}) provided a comprehensive overview of the exponential stability and polynomial stability for parameters {$(\alpha,\beta)\in[0,1]\times [0,1]$} and $\gamma\in(0,1]$. The optimality of the polynomial decay rates in \cite{liure} was proved in \cite{kuang}.
So far, the research on stability and optimal decay rates of the thermoelastic system \eqref{101} following Fourier's law has been relatively complete.

As pointed out earlier, studies of thermoelastic systems following Cattaneo's law were investigated to revise the downsides of Fourier's law. In 2019,  Fern\'{a}ndez  Sare \emph{et al.} (\cite{liure}) characterized the exponentially stable region of the thermoelastic model with Cattaneo's law and claimed that, ``{\it the change from {Fourier's law to Cattaneo's law} leads to a loss of exponential stability in most coupled systems."}  Later,  Han \emph{et al.} (\cite{han})  demonstrated that if the system maintains the same wave speed, i.e., $\sigma\tau=1$, the region of exponential stability outlined in \cite{liure} can be expanded. Furthermore, they characterized the polynomial decay rates of system \eqref{101} with two specific choices of $\alpha$. 

In this paper,  we aim to make a contribution toward the open problem mentioned above by Russell. We continue the study of Han \emph{et al.} (\cite{han}) regarding the polynomial stability of system \eqref{101} following Cattaneo's law, both with and without an inertial term, when the parameters lie outside of exponentially stable regions.
Specifically, our results reveal the influence of coupling term, thermal damping term and the inertial term (determined by $\alpha, \; \beta,\; \gamma$, respectively) on the stability and decay rate of the abstract coupled system \eqref{101}.  For the system with an inertial term, we divide the non-exponential stable region of parameters into three parts and obtain different polynomial decay rates for each part.
 On the other hand, for the system without an inertial term,  the region of parameters is two-dimensional and divided into two subregions based on spectral analysis. Similarly, we obtain polynomial decay rates for each of these subregions.  Furthermore, we prove the optimality of all obtained polynomial decay rates. Our main approach relies on the frequency domain characterization developed by Borichev and Tomilov (\cite{bobtov1}), along with interpolation inequalities and detailed spectral analysis.

The rest of the paper is organized as follows. In Section \ref{sec3}, we introduce the problem and outline the main results. The proofs of polynomial stability for the system with and without an inertial term are given in \Cref{8.8} and \Cref{8.7}, respectively. In Section \ref{8.6},   we show the optimality of the decay rates in each subregion of parameters. Finally, in Section \ref{8.5}, we apply our results to  several  partial differential equation systems and obtain optimal polynomial decay rates.

\section{Preliminaries and Main Results}\label{sec3}
\setcounter{equation}{0}
In this section, we state some preliminaries and the main results of this paper.
To describe system \eqref{101} as an abstract first-order evolution equation, we define a Hilbert space
$$\mathcal{H}:=\mathcal{D}(A^{\frac{1}{2}})\times \mathcal{D}(A^{\frac{\gamma}{2}})\times H\times H,$$
which is equipped with the inner product
$$ \langle U_1,U_2\rangle_{\mathcal{H}}=\sigma(A^{\frac{1}{2}}u_1,A^{\frac{1}{2}}u_2)+m(A^{\frac{\gamma}{2}}v_1, A^{\frac{\gamma}{2}}v_2)+(v_1,v_2)+(\theta_1, \theta_2)+\tau (q_1,q_2),$$
where
$U_{i}=(u_{i},v_{i},\theta_{i},q_{i})^\top\in\mathcal{H},\;i=1,2$. Let $t>0$, {define $v(t) := u_t(t)$,
$U(t):=(u(t), v(t), \theta(t), q(t))^\top$, and $U_0:=(u_0, u_1, \theta_0 , q_0)^\top \in \mathcal{H}$,} then system \eqref{101} can be written as an evolution equation  in $\mathcal{H}$:
\begin{equation}\label{105}
\left\{ \begin{array}{lll}
\displaystyle {\mathrm{d}\over \mathrm{d} t}  U(t)=\mathcal{A} U(t) ,\quad t>0,\\ \noalign{\medskip}  \displaystyle
U(0)=U_0, \end{array} \right.
\end{equation}
where the operator $\mathcal{A}:{\cal D}({\cal A}) \subset \mathcal{H}\to \mathcal{H}$ is defined by
$$\mathcal{A}\left[\begin{array}{c}
u\\
v\\
\theta\\
q
\end{array} \right]=\left(\begin{array}{c}
v\\
-(I+mA^{\gamma})^{-1}(\sigma A  u-A^{\alpha}\theta)\\
-A^\alpha v+A^{\frac{\beta}{2}}q\\ \displaystyle
{1\over \tau}(-q-A^{\frac{\beta}{2}}\theta)
\end{array}\right),
$$
with domain
$$\mathcal{D}(\mathcal{A})=\left\{ (u, v, \theta, q)^{\top}\in \mathcal{H}\,\left|\,
\begin{array}{l}
v\in \mathcal{D}(A^{1\over2}),\;   \sigma A  u-A^\alpha\theta \in \mathcal{D}(A^{ -\frac{\gamma}{2}}),\\
-A^\alpha v+A^{\frac{\beta}{2}}q\in H,\;
q+A^{\frac{\beta}{2}}\theta\in H
\end{array}
\right.\right\}.$$
It is clear that
\begin{align}\label{5.27}
    {\cal D}({\cal A}) \supset {\cal D}_0 \doteq {\cal D}(A^{1-{\gamma\over2}})
\times {\cal D}(A^{{1\over2} \vee \alpha}) \times {\cal D}(A^{(\alpha -{\gamma\over2})\vee {\beta\over2}}) \times {\cal D}(A^{\beta\over2}),
\end{align}
where $\alpha \vee \beta = \max\{\alpha, \; \beta\}$ for $\alpha,\; \beta \in \mathbb{R}.$

The well-posedness of the system \eqref{105} is stated as follows.
\begin{lemma}\label{t-1-1}
	Let $\mathcal{A}$ and $\mathcal{H}$ be defined as above and $(\alpha, \beta, \gamma)\in E$.
	If $\beta\geq 2\alpha-1$, then $0\in \rho(\mathcal{A})$  and $\mathcal{A}$ generates a $C_{0}$-semigroup $e^{{\cal A}t }$ of contractions on  $\mathcal{H}$.
\end{lemma}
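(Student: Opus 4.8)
The plan is to verify the hypotheses of the Lumer–Phillips theorem, namely that $\mathcal{A}$ is dissipative and that $I-\mathcal{A}$ (or equivalently $\mathcal{A}$ itself, after handling $0\in\rho(\mathcal{A})$) is surjective, so that $\mathcal{A}$ generates a contraction $C_0$-semigroup on $\mathcal{H}$. First I would establish dissipativity: for $U=(u,v,\theta,q)^\top\in\mathcal{D}(\mathcal{A})$, compute $\operatorname{Re}\langle \mathcal{A}U,U\rangle_{\mathcal{H}}$ directly from the definition of the inner product on $\mathcal{H}$. The terms $\sigma(A^{\frac12}v,A^{\frac12}u)$, $-(\sigma Au - A^\alpha\theta, v)$ from the second component (paired against $v$ using the weight $(I+mA^\gamma)$, which cancels the prefactor), and the coupling terms $(A^{\frac\beta2}q,\theta)$, $-(A^\alpha v,\theta)$, $\frac1\tau(-A^{\frac\beta2}\theta,q)\cdot\tau$ should all cancel in real part in the usual skew-symmetric fashion, leaving $\operatorname{Re}\langle\mathcal{A}U,U\rangle_{\mathcal{H}} = -\|q\|^2\le 0$. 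This mirrors the energy identity $\frac{d}{dt}\mathcal{E}(t)=-\|q\|^2$ already recorded in the introduction.

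Next I would prove $0\in\rho(\mathcal{A})$ under the assumption $\beta\ge 2\alpha-1$, which simultaneously gives surjectivity of $\mathcal{A}$ (range equal to $\mathcal{H}$) and, combined with dissipativity, injectivity and boundedness of the inverse. Given $F=(f_1,f_2,f_3,f_4)^\top\in\mathcal{H}$, I would solve $\mathcal{A}U=F$ componentwise: from the first equation $v=f_1\in\mathcal{D}(A^{\frac12})$; from the fourth, $q=-A^{\frac\beta2}\theta - \tau f_4$; substituting into the third gives $\theta$ in terms of $v$, $f_3$, $f_4$ via the elliptic-type relation $A^{\frac\beta2}q = A^\alpha v + f_3$, i.e. $-A^\beta\theta = A^\alpha f_1 + f_3 + \tau A^{\frac\beta2}f_4$, which is solvable for $\theta$ since $A$ is positive definite with compact resolvent (hence $A^{-\beta}$ bounded when $\beta>0$; the case $\beta=0$ forces reconsideration but there the damping is direct). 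Finally the second equation determines $u$ from $\sigma Au = A^\alpha\theta - (I+mA^\gamma)f_2$, again invertible since $\sigma A$ is boundedly invertible. The role of the condition $\beta\ge 2\alpha-1$ is to guarantee that the resulting $\theta$, and in particular the quantity $A^\alpha u - (\text{lower order})$ whose regularity is needed for $u\in\mathcal{D}(A^{1-\gamma/2})$ and for the membership conditions in $\mathcal{D}(\mathcal{A})$, actually lands in the required domains; concretely one checks $A^\alpha\theta\in\mathcal{D}(A^{-\gamma/2})$ translates into an inequality between $\alpha$, $\beta$, $\gamma$ that $\beta\ge 2\alpha-1$ (together with $\gamma>0$) suffices to satisfy, using $\alpha\le 1$. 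I would make this bookkeeping precise by tracking the power of $A$ applied to each $f_i$ and verifying it is nonpositive or otherwise absorbed.

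The main obstacle I anticipate is precisely this regularity bookkeeping in the surjectivity step: one must carefully confirm that the constructed $U$ lies in $\mathcal{D}(\mathcal{A})$ — in particular that $\sigma Au - A^\alpha\theta\in\mathcal{D}(A^{-\gamma/2})$ and that the two compatibility conditions $-A^\alpha v + A^{\frac\beta2}q\in H$ and $q+A^{\frac\beta2}\theta\in H$ hold — and that the map $F\mapsto U$ is bounded on $\mathcal{H}$. The inclusion \eqref{5.27}, $\mathcal{D}(\mathcal{A})\supset\mathcal{D}_0$, is the natural target: I would show the solution $U$ we built actually lies in $\mathcal{D}_0$ (or close to it) when $F$ is smooth, then pass to general $F\in\mathcal{H}$ by density together with the a priori bound. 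Once $0\in\rho(\mathcal{A})$ and dissipativity are in hand, $\mathcal{A}$ is closed and densely defined (density following from $\mathcal{D}_0$ being dense in $\mathcal{H}$, since $A$ has compact resolvent so the span of its eigenvectors is dense in each factor), and the Lumer–Phillips theorem yields the $C_0$-semigroup of contractions $e^{\mathcal{A}t}$, completing the proof.
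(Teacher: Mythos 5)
Your proposal follows essentially the same route as the paper's proof: dissipativity via $\Re \langle\mathcal{A}U,U\rangle_{\mathcal{H}}=-\|q\|^2$, an explicit componentwise solution of $\mathcal{A}U=G$ (with the same formulas for $v,\theta,q,u$), a power-of-$A$ bookkeeping giving $\|U\|_{\mathcal{H}}\le C\|G\|_{\mathcal{H}}$, and the Lumer--Phillips theorem. The only slip is your attribution of the hypothesis $\beta\ge 2\alpha-1$: it is not needed for $\sigma Au-A^\alpha\theta\in\mathcal{D}(A^{-\gamma/2})$ (by construction this difference equals $-(I+mA^\gamma)g_2$, which lies there since $g_2\in\mathcal{D}(A^{\gamma/2})$), but rather to bound $\|q\|$ and $\|A^{\frac12}u\|$ by $\|G\|_{\mathcal{H}}$, because the terms $A^{\alpha-\frac{\beta}{2}}g_1$ and $A^{2\alpha-\beta-\frac12}g_1$ require precisely $\alpha-\frac{\beta}{2}\le\frac12$ when $g_1\in\mathcal{D}(A^{\frac12})$ --- a point your planned power-tracking would surface anyway.
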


\begin{proof}  	
	 It is easy to see that $\mathcal{A}$ is a densely defined operator {in} $\mathcal{H}$  by \eqref{5.27} and the density of $\mathcal{D}_0$ in $\mathcal{H}$. Moreover,
	$$\Re \langle\mathcal{A}U,U \rangle_{\mathcal{H}}=-\|q\|^2\leq 0,\;\forall \; U\in\mathcal{D}(\mathcal{A}),$$
	then $\mathcal{A}$ is dissipative in $\mathcal{H}$.
	
	  We claim that $\mathcal{A}$ is a bijection, then $\mathcal{A}^{-1}$ exists.
	In fact,  for any given $G:=(g_1,g_2,g_3,g_4)^{\top}\in\mathcal{H}$, let
	$U=(u,v,\theta,q)^{\top}$, where
	\begin{align*}
	v =g_1, \quad
	\theta =-A^{-\frac{\beta}{2}}(\tau g_4+A^{-\frac{\beta}{2}}(g_3+A^{\alpha}g_1)), \quad   q =A^{-\frac{\beta}{2}}(g_3+A^{\alpha}g_1),
	\end{align*}
	and $$
	u =\sigma^{-1}A^{-1}(A^{\alpha}\theta-(I+mA^{\gamma})g_2),
	$$
	
	\noindent
	then $U \in \mathcal{D}(\mathcal{A}) $ and $\mathcal{A} U=G$. This implies that  ${\cal A}$ is an injective as well.
	
	Now we show the boundedness of $\mathcal{A}^{-1}$.  Note that $\mathcal{D}(A^{\frac{1}{2}})\subset \mathcal{D}(A^{\frac{\gamma}{2}})$, we have
	\begin{equation}\label{202}
	\|v\|_{\mathcal{D}(A^{\frac{\gamma}{2}})}\leq\|g_1\|_{\mathcal{D}(A^{\frac{1}{2}})}\leq C\|G\|_{\mathcal{H}}.
	\end{equation}
	Recalling $\beta\ge2\alpha-1$, we get
	\begin{equation}\label{203}
	\|q\|\leq C(\|g_3\|+\|A^{\alpha-\frac{\beta}{2}}g_1\|)
	\leq C(\|g_3\|+\|g_1\|_{\mathcal{D}(A^{\frac{1}{2}})})\leq C\|G\|_{\mathcal{H}},
	\end{equation}
	\begin{equation}\label{204}
	\|\theta\|\leq C(\|g_4\|+\|q\|)\leq C\|G\|_{\mathcal{H}},
	\end{equation}
	and
	\begin{eqnarray}\label{205}
 \begin{array}{lll}
    	\|A^{\frac{1}{2}}u\|
	&\leq &
	C\big(\|A^{-\frac{1}{2}+\alpha-\frac{\beta}{2}}g_4\|+  \|A^{-\frac{1}{2}+\alpha-\beta}g_3\|
	+ \|A^{2\alpha-\beta-\frac{1}{2}}g_1\| +  \|A^{-\frac{1}{2}}(I+mA^\gamma)g_2\|\big)
	\cr
 \noalign{\medskip}
	&\leq & C\|G\|_{\mathcal{H}}.\quad
 \end{array}
	\end{eqnarray}
	\noindent
 Consequently, one obtains the boundedness of $\mathcal{A}^{-1}$ by (\ref{202})-(\ref{205}). By the Lumer-Phillips Theorem \cite{pazy}, $\mathcal{A}$ generates a $C_{0}$-semigroup $e^{{\cal A}t }$ of contractions on $\mathcal{H}$. The proof is completed.
\end{proof}

\begin{remark}\label{R-2-1}
	If $\beta< 2\alpha-1$, $0$ is a spectrum point of $\mathcal{A}$. This case will be considered in the future.
\end{remark}

Our interest is the stability properties of system \eqref{101}, especially the polynomial stability when {parameters $( \alpha, \;\beta,\; \gamma) $ satisfy certain conditions. Let us recall the corresponding definitions.}

\begin{definition}\label{def}
	Let $\mathcal{A}\,:\,\mathcal{D}(\mathcal{A}) \subset \mathcal{H} \to \mathcal{H}$ generate a   $C_0$-semigroup $e^{\mathcal{A}t}$  of contractions on a  Hilbert space  $\mathcal{H}$.
	\begin{enumerate}
		\item [{\rm (i)}] $e^{\mathcal{A}t}$ is said to be {\bf strongly stable} if
		$\underset{t\to\infty}{\lim}\|e^{\mathcal{A}t}x_0 \|_{\mathcal{H}}=0, \quad  \forall \;x_0\in \mathcal{H};$
		\item[{\rm (ii)}] $e^{\mathcal{A}t}$ is said to be {\bf exponentially stable} with decay rate $\omega>0$, if
		there exists a constant  $ C>0$   such that
		\begin{equation*}\label{def1}
		\|e^{\mathcal{A}t} \|_{{\cal L}(\mathcal{H})}\le C e^{-\omega t}, \quad  \forall \;t\ge 0;
		\end{equation*}
		\item[{\rm (iii)}]
		$e^{\mathcal{A}t}$ is said to be {\bf polynomially stable} of order $k>0$ if there exists a constant $C>0$   such that
		\begin{equation*}\label{def2}
		\|e^{\mathcal{A}t}\mathcal{A}^{-1}\|_{{\cal L}(\mathcal{H})} \le C t^{-k}, \quad  \forall \;t>0.
		\end{equation*}
	\end{enumerate}
\end{definition}

Based on the result in \cite{Arendt,huang}, the strong stability of the semigroup $e^{t\mathcal{A}}$  is equivalent to $i {\mathbb R}\subset \rho(\mathcal{A})$ since $0\in \rho(\mathcal{A})$ and $\mathcal{A}^{-1}$ is compact (\cite{han}).
The following lemma can be proved by the standard argument (see, e.g., the proof of { \cite[Theorem 2.1]{han}), we omit the details here.}

\begin{lemma}\label{l202}
	Assume 	that $(\alpha, \beta,\gamma)\in E$   satisfies $\beta\geq 2\alpha-1$.	
	Then  $i {\mathbb R}\subset \rho(\mathcal{A})$  for $m\ge0$, and consequently,  the semigroup $e^{t\mathcal{A}}$ is strongly stable.
\end{lemma}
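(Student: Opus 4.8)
The plan is to use the equivalence recalled just before the statement: since $0\in\rho(\mathcal{A})$ by \Cref{t-1-1} and $\mathcal{A}^{-1}$ is compact, it suffices to prove $i\mathbb{R}\subset\rho(\mathcal{A})$, and compactness of the resolvent reduces this in turn to showing that $\mathcal{A}$ has no eigenvalue on the imaginary axis. As $0$ is already in $\rho(\mathcal{A})$, the only thing left is to rule out $i\lambda\in\sigma(\mathcal{A})$ for real $\lambda\neq0$.

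Suppose, for contradiction, that $\mathcal{A}U=i\lambda U$ for some $0\neq U=(u,v,\theta,q)^{\top}\in\mathcal{D}(\mathcal{A})$ and $\lambda\in\mathbb{R}\setminus\{0\}$. Taking the real part of $\langle\mathcal{A}U,U\rangle_{\mathcal{H}}=i\lambda\|U\|_{\mathcal{H}}^{2}$ and using the dissipativity identity $\Re\langle\mathcal{A}U,U\rangle_{\mathcal{H}}=-\|q\|^{2}$ from the proof of \Cref{t-1-1}, one gets $\|q\|^{2}=0$, hence $q=0$. Now I read off the components of $\mathcal{A}U=i\lambda U$. The fourth equation ${1\over\tau}(-q-A^{\frac{\beta}{2}}\theta)=i\lambda q$ together with $q=0$ gives $A^{\frac{\beta}{2}}\theta=0$; since $A$ is positive definite, $A^{\frac{\beta}{2}}$ is injective, and $\theta\in\mathcal{D}(A^{\frac{\beta}{2}})$ because $q+A^{\frac{\beta}{2}}\theta\in H$, so $\theta=0$. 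The third equation $-A^{\alpha}v+A^{\frac{\beta}{2}}q=i\lambda\theta$ then reduces to $A^{\alpha}v=0$, and injectivity of $A^{\alpha}$ yields $v=0$. Finally the first equation $v=i\lambda u$ with $\lambda\neq0$ forces $u=0$, so $U=0$, contradicting that $U$ is an eigenvector. Note that the second equation, and hence the inertial parameter $m\ge0$, never enters the argument, which is precisely why the conclusion is uniform in $m\ge0$.

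With $i\mathbb{R}\cap\sigma(\mathcal{A})=\emptyset$ established, compactness of $\mathcal{A}^{-1}$ gives $i\mathbb{R}\subset\rho(\mathcal{A})$, and then the Arendt--Batty / Lyubich--Vu criterion \cite{Arendt,huang} for bounded $C_0$-semigroups delivers strong stability of $e^{t\mathcal{A}}$. The only point requiring any care is the domain bookkeeping in the chain of deductions: one must check that $\theta\in\mathcal{D}(A^{\frac{\beta}{2}})$ and $v\in\mathcal{D}(A^{\alpha})$ so that the operators $A^{\frac{\beta}{2}}$ and $A^{\alpha}$ may legitimately be applied and their injectivity invoked, but this is immediate from the definition of $\mathcal{D}(\mathcal{A})$. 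Everything else is routine, which is why this is the ``standard argument'' and the details can be omitted by reference to \cite[Theorem 2.1]{han}.
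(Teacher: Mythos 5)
Your argument is correct and is precisely the ``standard argument'' the paper invokes without writing out: since $\mathcal{A}^{-1}$ is compact the spectrum is pure point spectrum, the dissipativity identity forces $q=0$ for any purely imaginary eigenvalue, and the resulting cascade $\theta=0$, $v=0$, $u=0$ rules out eigenvalues $i\lambda$ with $\lambda\neq 0$, after which Arendt--Batty gives strong stability. The domain bookkeeping and the observation that the second equation (hence $m$) is never used are both handled correctly, so this matches the proof the paper delegates to \cite[Theorem 2.1]{han}.
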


We further discuss the explicit decay rates of the system with $(\alpha,\beta,\gamma)\in E$. To begin with, we divide the region of the parameter $E$ into the following subregions according to the analysis for the spectrum of ${\cal A}$ in Section \ref{8.6}.
\begin{definition}
	\label{def:E-partition}
	The parameters region  $E \!=\! \curly{\!( \alpha,\beta,\gamma)\! \mid \!\alpha\! \in [0,1],\! \beta\! \in [0,1],\! \gamma\! \in (0,1] }$ can be partitioned into the following non-overlapping subregions
	\allowdisplaybreaks
	 \begin{align*}
	&T_1 = \curly{(\alpha,\beta,\gamma) \in E \;\Big|\; \frac{1}{2}<\alpha <\frac{\beta +1}{2},\;0<\beta \leq 1,\; 0 < \gamma <2 \alpha -\beta} , \\
	&T_2 = \curly{(\alpha,\beta,\gamma) \in E \;\Big|\; 0 \leq \alpha <\frac{1}{2},\;0<\beta <1,\; 0 < \gamma <1-\beta}, \\
	&T_3 = \curly{(\alpha,\beta,\gamma) \in E \;\Big|\; 0\leq \alpha <\frac{\beta +\gamma }{2},\;1-\gamma <\beta \leq 1,\;0<\gamma \le 1}, \\
	&T_4 = \curly{(\alpha,\beta,\gamma) \in E \;\Big|\; \frac{\beta +1}{2}<\alpha \leq 1,\;0\leq \beta <1,\;0 < \gamma \leq 1}, \\
	&F_{12} = \curly{(\alpha,\beta,\gamma) \in E \;\Big|\; \alpha =\frac{1}{2},\;0<\beta <1-\gamma ,\;0 < \gamma <1}, \\
	&F_{13} = \curly{(\alpha,\beta,\gamma) \in E \;\Big|\; \alpha =\frac{\beta +\gamma }{2},\;1-\gamma <\beta \leq 1,\; 0 < \gamma <1}, \\
	&F_{14} = \curly{(\alpha,\beta,\gamma) \in E \;\Big|\; \alpha =\frac{\beta +1}{2},\;0<\beta \leq 1,\;0 < \gamma <1}, \\
	&F_{2} = \curly{(\alpha,\beta,\gamma) \in E \;\Big|\; 0\leq \alpha <\frac{1}{2},\;\beta =0,\;0 < \gamma <1}, \\
	&F_{23} = \curly{(\alpha,\beta,\gamma) \in E \;\Big|\; 0\leq \alpha <\frac{1}{2},\;\beta =1-\gamma ,\;0<\gamma <1}, \\
	& L_{123} =\curly{(\alpha,\beta,\gamma) \in E \;\Big|\;
		\alpha =\frac{1}{2},\;\
		\beta =1-\gamma,\;\
		0<\gamma <1}, \\
	&L_{124} = \curly{(\alpha,\beta,\gamma) \in E \;\Big|\;
		\alpha =\frac{1}{2},\
		\beta =0,\
		0 < \gamma <1},\; \\
	& L_2 =\curly{(\alpha,\beta,\gamma) \in E \;\Big|\; 0\leq \alpha <\frac{1}{2},\;\beta =0,\;\gamma =1},\; \\
	&L_{34} = \curly{(\alpha,\beta,\gamma) \in E \;\Big|\; \alpha =\frac{\beta +1}{2},0<\beta \leq 1,\;\gamma =1}, \\
	&P_{234} = \curly{(\alpha,\beta,\gamma) \in E \;\Big|\; \alpha =\frac{1}{2},\;\beta =0,\;\gamma =1}.
	\end{align*}
\end{definition}
\begin{remark}
	{From the definition, it is clear that planes $F_{ij} $ are (parts) of the intersections of $T_i$ and $T_j$, where $i,j\in\{1,2,3,4\},\; i\neq j$,  lines $L_{ijk}$  are intersections of $T_i,\;T_j,\; T_k,$ where $ i,j,k \in\{1,2,3,4\},\; ,i\neq j\neq k$.
	Furthermore, the plane $F_2$ is a part of the boundary of $T_2$,
	the line $L_2$ is a special part of the boundary of $F_2$,
	the line $L_{34}$ is a  part of the boundary of $F_{14}$, and $P_{234}$ is a point.}
	The regions in {Definition \ref{def:E-partition} } are visualized in Figure \ref{fig:E}.
\end{remark}
	
\begin{remark}\label{rem23}	According to \cite[Theorem 2.1]{han}, it was proved that the semigroup is exponentially stable on plane  $F_{13}$, along the lines $L_{123}$, $L_{34}$ and at the point $P_{234}$.
	In the case of region $T_{4},$ where $0$ is a spectrum, we will analyze it in the future work. Moreover, the asymptotic behavior on $L_2$  remains unclear at present (as indicated by the analysis in  Section \ref{8.6}).
\end{remark}

\begin{figure}
	\centering
	\includegraphics[width=6cm]{./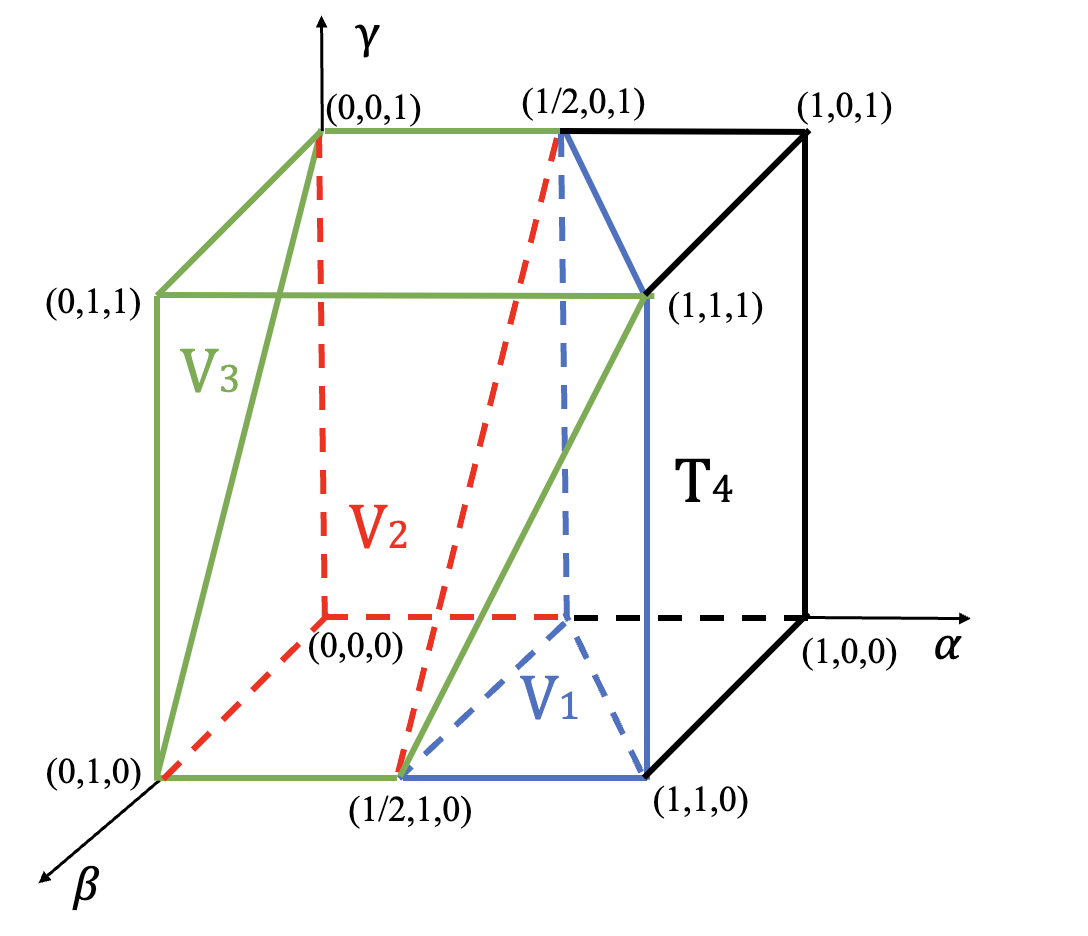}
	\caption{\small   Regions of polynomial stability when $m>0$.}
	\label{m>0}
\end{figure}

\begin{figure}[t]
	\centering
	\begin{subfigure}{0.4\textwidth}
		\centering
		\includegraphics[scale=0.6]{./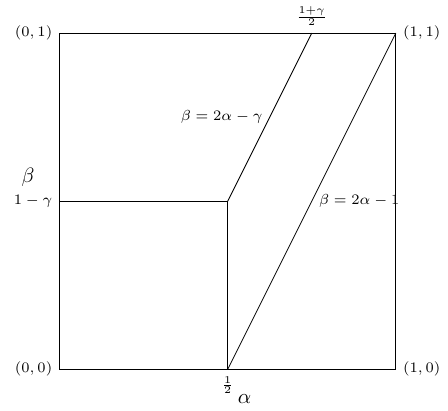}
		\caption{$\gamma=\frac{1}{2}$}
	\end{subfigure}~
	\begin{subfigure}{0.4\textwidth}
		\centering
		\includegraphics[scale=0.6]{./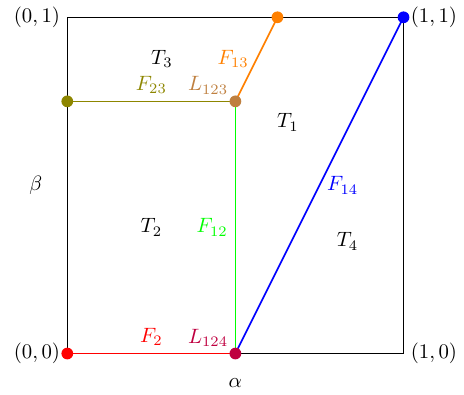}
		\caption{$\gamma=\frac{1}{4}$}
	\end{subfigure}\\
	\begin{subfigure}{0.4\textwidth}
		\centering
		\includegraphics[scale=0.6]{./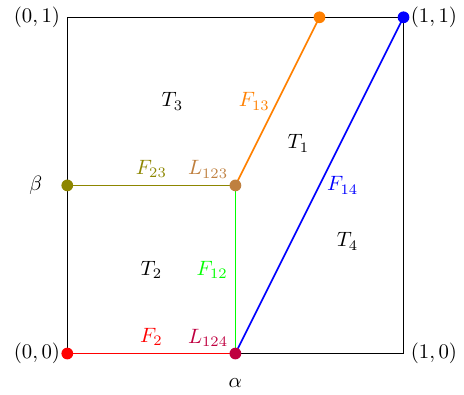}
		\caption{$\gamma=\frac{1}{2}$}
	\end{subfigure}~
	\begin{subfigure}{0.4\textwidth}
		\centering
		\includegraphics[scale=0.6]{./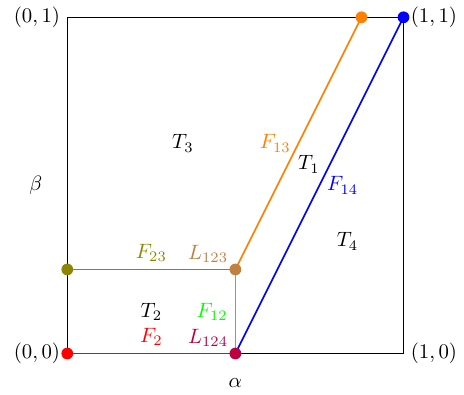}
		\caption{$\gamma=\frac{3}{4}$}
	\end{subfigure}\\
	\caption{Partition of $E$ according to \Cref{def:E-partition} when $\gamma$ is  $\frac{1}{4}$, $\frac{1}{2}$, $\frac{3}{4}$, respectively.}
	\label{fig:E}
\end{figure}

More precisely, due to Remark \ref{rem23} and spectral analysis in Section \ref{8.6}, we shall analyze the  stability and decay rate of system \eqref{101} with  inertial term ($m\not=0$) when parameters $(\alpha,\,\beta,\,\gamma)$ belong to the following three regions:
\begin{equation}
\label{par1}
\begin{array}{l}
\displaystyle
V_1:=T_1\cup F_{12}\cup F_{14}\cup L_{124}, \\ \noalign{\medskip}  \displaystyle
V_2:=T_2\cup F_{2},
\\ \noalign{\medskip}  \displaystyle
V_3:=T_{3}\cup F_{23}.
\end{array}\end{equation}

\noindent
The regions $V_i, \; i=1,2,3$ are  illustrated  in Figure \ref{m>0}. The first main result of this paper reads as follows.

\begin{theorem}\label{t-3-1}
	Let $m>0$ and $V_i,\;i=1,2,3$ be defined as in \eqref{par1}.
	Then semigroup $e^{\mathcal{A}t}$ associated with (\ref{105}) has the following stability
	properties: 
	
	\begin{enumerate}
		\item[{\rm (i)}] It is polynomially stable of order $k_1=\displaystyle \frac{2\alpha-\gamma}{2(2\alpha-\beta-\gamma)}$ in $V_1$;
		
		\item[{\rm (ii)}] It is polynomially stable of order $k_2=\displaystyle \frac{1-\gamma}{2(-2\alpha-\beta-\gamma+2)}$ in $V_2$;
		
		\item[{\rm (iii)}] It is polynomially stable of order $k_3=\displaystyle
		\frac{1-\gamma}{2(-2\alpha+\beta+\gamma)}$ in $V_3$.
	\end{enumerate}
\end{theorem}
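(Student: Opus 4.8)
The plan is to combine the Borichev--Tomilov frequency-domain criterion \cite{bobtov1} with a contradiction argument on the imaginary axis. Since $e^{\mathcal{A}t}$ is a bounded $C_{0}$-semigroup and $i\mathbb{R}\subset\rho(\mathcal{A})$ by Lemma~\ref{l202}, the semigroup is polynomially stable of order $k>0$ if and only if
\begin{equation*}
\limsup_{|\lambda|\to\infty}\ \frac{1}{|\lambda|^{1/k}}\,\bigl\|(i\lambda I-\mathcal{A})^{-1}\bigr\|_{\mathcal{L}(\mathcal{H})}<\infty .
\end{equation*}
So it suffices to prove, for each $j\in\{1,2,3\}$ and $(\alpha,\beta,\gamma)\in V_{j}$, the resolvent bound $\|(i\lambda I-\mathcal{A})^{-1}\|_{\mathcal{L}(\mathcal{H})}=O(|\lambda|^{1/k_{j}})$ as $|\lambda|\to\infty$. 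I would argue by contradiction: if it fails there exist $\lambda_{n}\in\mathbb{R}$ with $|\lambda_{n}|\to\infty$ (bounded frequencies are excluded because $\lambda\mapsto(i\lambda I-\mathcal{A})^{-1}$ is continuous on the compact subsets of $i\mathbb{R}\subset\rho(\mathcal{A})$) and $U_{n}=(u_{n},v_{n},\theta_{n},q_{n})^{\top}\in\mathcal{D}(\mathcal{A})$ with $\|U_{n}\|_{\mathcal{H}}=1$ such that $G_{n}:=(i\lambda_{n}I-\mathcal{A})U_{n}$ satisfies $\|G_{n}\|_{\mathcal{H}}=o(|\lambda_{n}|^{-1/k_{j}})$. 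Dropping the index $n$ and writing $G=(g_{1},g_{2},g_{3},g_{4})^{\top}$, the resolvent identity reads
\begin{align*}
&i\lambda u-v=g_{1},\qquad i\lambda(I+mA^{\gamma})v+\sigma Au-A^{\alpha}\theta=(I+mA^{\gamma})g_{2},\\
&i\lambda\theta+A^{\alpha}v-A^{\beta/2}q=g_{3},\qquad (1+i\lambda\tau)q+A^{\beta/2}\theta=\tau g_{4},
\end{align*}
and the goal is to deduce $\|U\|_{\mathcal{H}}\to0$, contradicting $\|U_{n}\|_{\mathcal{H}}=1$.

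The first step is the dissipation estimate: pairing the identity with $U$ in $\mathcal{H}$ and using $\mathrm{Re}\langle\mathcal{A}U,U\rangle_{\mathcal{H}}=-\|q\|^{2}$ gives $\|q\|^{2}=\mathrm{Re}\langle G,U\rangle_{\mathcal{H}}\le\|G\|_{\mathcal{H}}=o(|\lambda|^{-1/k_{j}})$, so $q\to0$ in $H$ at a quantitative rate; moreover $\|u\|=O(|\lambda|^{-1})$ from the first equation, while $\sigma\|A^{1/2}u\|^{2}$, $\|v\|^{2}+m\|A^{\gamma/2}v\|^{2}$ and $\|\theta\|^{2}$ are all $\le 1+o(1)$ because $\|U\|_{\mathcal{H}}=1$. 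Next I would feed the $q$-bound into the fourth equation to control $\|A^{\beta/2}\theta\|\le C|\lambda|\,\|q\|+o(1)$, eliminate $v$ from the first two equations to obtain the elastic equation $-\lambda^{2}(I+mA^{\gamma})u+\sigma Au-A^{\alpha}\theta=\Phi$ with $\|A^{-\gamma/2}\Phi\|=o(|\lambda|^{1-1/k_{j}})$, and then run a hierarchy of multiplier identities — pairing the $\theta$-equation and the elastic equation with suitable powers $A^{s}\theta$, $A^{s}q$, $A^{s}u$, $A^{s}v$ — absorbing each cross term by an interpolation inequality $\|A^{\eta}w\|\le\varepsilon\|A^{\eta_{1}}w\|+C_{\varepsilon}\|A^{\eta_{0}}w\|$ with $\eta_{0}<\eta<\eta_{1}$ and outer exponents among the quantities already controlled. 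This recovers successively $\|\theta\|$, the kinetic term $\|v\|^{2}+m\|A^{\gamma/2}v\|^{2}$, and finally $\sigma\|A^{1/2}u\|^{2}$, each up to a power of $|\lambda|$ times $o(1)$.

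The decisive point, and the reason the three regions carry different rates, is the bookkeeping of powers of $|\lambda|$. Because $m>0$, at high frequency $(I+mA^{\gamma})^{-1}\sigma A$ behaves like $\sigma m^{-1}A^{1-\gamma}$, so the elastic part lives at the scale $|\lambda|\sim\mu^{(1-\gamma)/2}$, $\mu$ being a spectral parameter of $A$; this shift of the elastic operator by $A^{-\gamma}$ is exactly what distinguishes $m>0$ from $m=0$ and enters every interpolation exponent. Pushing the estimates through, the residual term that obstructs $\|U\|_{\mathcal{H}}\to0$ should scale like $|\lambda|^{\,2(2\alpha-\beta-\gamma)/(2\alpha-\gamma)}$ in the strong-coupling region $V_{1}$ (where $\alpha>\tfrac12$, including its boundary faces), like $|\lambda|^{\,2(2-2\alpha-\beta-\gamma)/(1-\gamma)}$ in $V_{2}$, and like $|\lambda|^{\,2(\beta+\gamma-2\alpha)/(1-\gamma)}$ in $V_{3}$; taking $k_{j}$ equal to the reciprocal of the corresponding exponent makes the compensating factor $|\lambda|^{-1/k_{j}}$ carried by $G_{n}$ cancel it, which is the source of
\begin{equation*}
k_{1}=\frac{2\alpha-\gamma}{2(2\alpha-\beta-\gamma)},\qquad
k_{2}=\frac{1-\gamma}{2(2-2\alpha-\beta-\gamma)},\qquad
k_{3}=\frac{1-\gamma}{2(\beta+\gamma-2\alpha)} ,
\end{equation*}
each positive on the corresponding region by its defining inequalities ($2\alpha-\beta-\gamma>0$ on $T_{1}$, $2-2\alpha-\beta-\gamma>0$ on $T_{2}$, $\beta+\gamma-2\alpha>0$ on $T_{3}$, together with their closures). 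Then every term in $\|U_{n}\|_{\mathcal{H}}^{2}$ tends to $0$, the desired contradiction.

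The hard part is precisely this exponent combinatorics: for each region one must pick the right chain of multipliers and interpolation exponents so that every intermediate exponent lies strictly between two already-bounded ones and so that the powers of $|\lambda|$ collected along the chain add up to exactly $1/k_{j}$ — no more, which would break the contradiction, and no less, which would contradict the sharpness established in Section~\ref{8.6}. Extra care is needed on the lower-dimensional pieces $F_{12},F_{14},L_{124}\subset V_{1}$ (and $F_{2}\subset V_{2}$, $F_{23}\subset V_{3}$), where a defining inequality degenerates to equality and certain interpolation steps become borderline; there I would check that the estimates survive in the limit and still yield the same $k_{j}$, using the continuity of $k_{j}$ in $(\alpha,\beta,\gamma)$ and the fact that the critical exponents still land in the closed admissible range. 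Commuting the bounded operator $(I+mA^{\gamma})^{-1}$ past powers of $A$ and estimating it both above and below at high frequency — which is what produces the $A^{-\gamma}$-shift — is a recurring technical nuisance but presents no genuine obstruction.
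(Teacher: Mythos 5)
Your framework coincides with the paper's: Lemma \ref{l-2-1}, a contradiction sequence with $\|U_n\|_{\mathcal H}=1$ and $(i\lambda_n-\mathcal A)U_n=o(\lambda_n^{-1/k_j})$, the dissipation estimate for $q_n$, control of $A^{\beta/2}\theta_n$ from the flux equation, and then region-dependent multipliers combined with interpolation. However, the one concrete tool you specify is the wrong one. You propose to absorb cross terms via $\|A^{\eta}w\|\le\varepsilon\|A^{\eta_1}w\|+C_\varepsilon\|A^{\eta_0}w\|$; as written (fixed $\varepsilon$), this only yields the \emph{maximum} of the two input bounds, whereas in every region one of the two inputs grows in $\lambda_n$. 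For instance, in $V_1$ the decisive step combines $\|A^{\beta/2}\theta_n\|=\lambda_n^{1-k/2}o(1)$ (growing, since $1-k/2=\beta/(2\alpha-\gamma)$) with $\|A^{-\alpha+\gamma/2}\theta_n\|=\lambda_n^{-1}O(1)$ to conclude $\|\theta_n\|=o(1)$, and this works only through the multiplicative moment inequality of Lemma \ref{lemma-inter}, whose exponents are tuned so that the powers of $\lambda_n$ cancel exactly at $k=1/k_1$ (see \eqref{v1_8}, \eqref{v1_1}, \eqref{v1_2}). The additive form gives $\varepsilon\lambda_n^{1-k/2}o(1)+C_\varepsilon\lambda_n^{-1}O(1)$, which is unbounded; the same failure recurs at essentially every interpolation step in Section \ref{8.8}.

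The second, larger gap is that the ``exponent bookkeeping'' you defer is the proof itself, and it is not a single uniform chain per region. The paper must split $V_2$ into the sub-cases $\alpha\le\gamma/2$ and $\gamma/2<\alpha<1/2$ (with different multipliers, $A^{(\beta-1)/2}(I+mA^\gamma)v_n$ versus $A^{-\alpha}(I+mA^\gamma)v_n$), and $V_3$ into four sub-cases governed by the positions of $\alpha$ relative to $\gamma/2$, $1/2$ and $(\beta+\gamma)/2$, and of $-\alpha+\beta/2+\gamma$ relative to $1/2$; each sub-case needs its own multiplier and its own treatment of the coupling term $(A^\alpha\theta_n,u_n)$ in \eqref{316-}. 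Your sketch asserts that a chain exists whose collected powers equal exactly $1/k_j$, but that is precisely the statement to be verified, and nothing in the proposal checks that the intermediate exponents remain in the admissible interpolation ranges across these sub-cases -- which is exactly where the defining inequalities of $T_1$, $T_2$, $T_3$ and their boundary faces are actually consumed. So the plan is the correct one, but the content of the paper's proof is absent and the stated interpolation inequality would not carry it out.
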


We also consider the case in which there is no inertial term in system \eqref{101}. Similarly, we partition the region of the parameters $E^*:=[0,1]^2$ as follows.
\begin{definition}
	\label{def:E-tilde-partition}
	The parameters region  $E^* = \curly{( \alpha,\beta) \mid \alpha,\;\beta \in [0,1] }$ can be partitioned into the following non-overlapping subregions
	\allowdisplaybreaks
	\begin{align*}
	&T_1^* = \curly{(\alpha,\beta) \in E \Big| \frac{1}{2}<\alpha <\frac{\beta +1}{2},0<\beta \leq 1} ,\\
	&T_2^* = \curly{(\alpha,\beta) \in E \Big| 0\leq \alpha <\frac{1}{2},0<\beta <1},\\
	&T_4^* = \curly{(\alpha,\beta) \in E \Big| \frac{\beta +1}{2}<\alpha \leq 1,0\leq \beta <1},\\
	&F^*_{12} = \curly{(\alpha,\beta) \in E \Big| \alpha =\frac{1}{2},0<\beta <1},\\
	&F^*_{14} = \curly{(\alpha,\beta) \in E \Big| \alpha =\frac{\beta +1}{2},0<\beta \leq 1},\\
	&F^*_{2} = \curly{(\alpha,\beta) \in E \Big| 0\leq \alpha <\frac{1}{2},\beta =0},\\
	&L^*_{124} = \curly{(\alpha,\beta) \in E \Big| \alpha =\frac{1}{2},\beta =0},\\
	&L^*_{23} = \curly{(\alpha,\beta) \in E \Big|
		0\leq \alpha <\frac{1}{2},\beta =1},\\
	&P^*_{123} = \curly{(\alpha,\beta) \in E \Big| \alpha =\frac{1}{2},\beta =1}.
	\end{align*}
\end{definition}

\begin{remark}
In Figure \ref{8131}, we illustrate the regions given in  Definition \ref{def:E-tilde-partition}.
The lines $F^*_{12}, F^*_{14}$ denote the boundaries of $T^*_1$, $T^*_2$ and $T^*_1$, $T^*_4$, respectively.  The lines  $F^*_2$ and $L^*_{23}$ are  boundaries of $T^*_2$.  The point  $L^*_{124}$ is the intersection of $T_i^*, i=1,2,4$. The region $T^*_4$ is the area where $0$ is a spectrum of $\mathcal{A}$, which will be discussed in the future work.
 \end{remark}

 According to \cite[Theorem 2.2]{han}, the system is exponentially stable on the point $P_{123}^*$.
We consider the decay rate of the solution when parameters belonging to the remaining area $E^*\backslash (T_4^*\cup P^*_{123})$.
Based on the analysis presented in Section 5,
we partition this region into the
following two parts.
\begin{equation}
\label{par2}
\begin{array}{l}\displaystyle
V_1^*:=T_1^*\cup F^*_{12}\cup F^*_{14}\cup L^*_{124},
\\ \noalign{\medskip}  \displaystyle
V_2^*:=T_2^*\cup F^*_2\cup L^*_{23}.
\end{array}
\end{equation}

\begin{figure}[htbp]
	\centering
	\begin{minipage}[t]{0.4\textwidth}
		\centering
		\includegraphics[scale=0.6]{./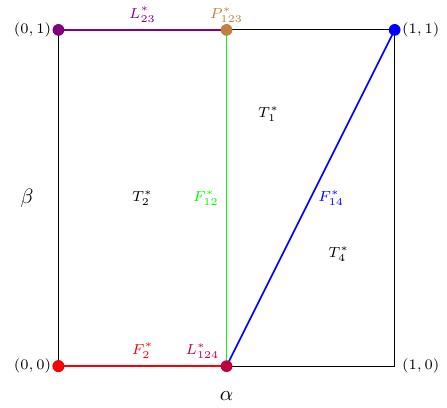}
		\caption{Partition of $E^*$ according to \Cref{def:E-tilde-partition}}
		\label{8131}
	\end{minipage}
	\begin{minipage}[t]{0.4\textwidth}
		\centering
		\includegraphics[scale=0.25]{./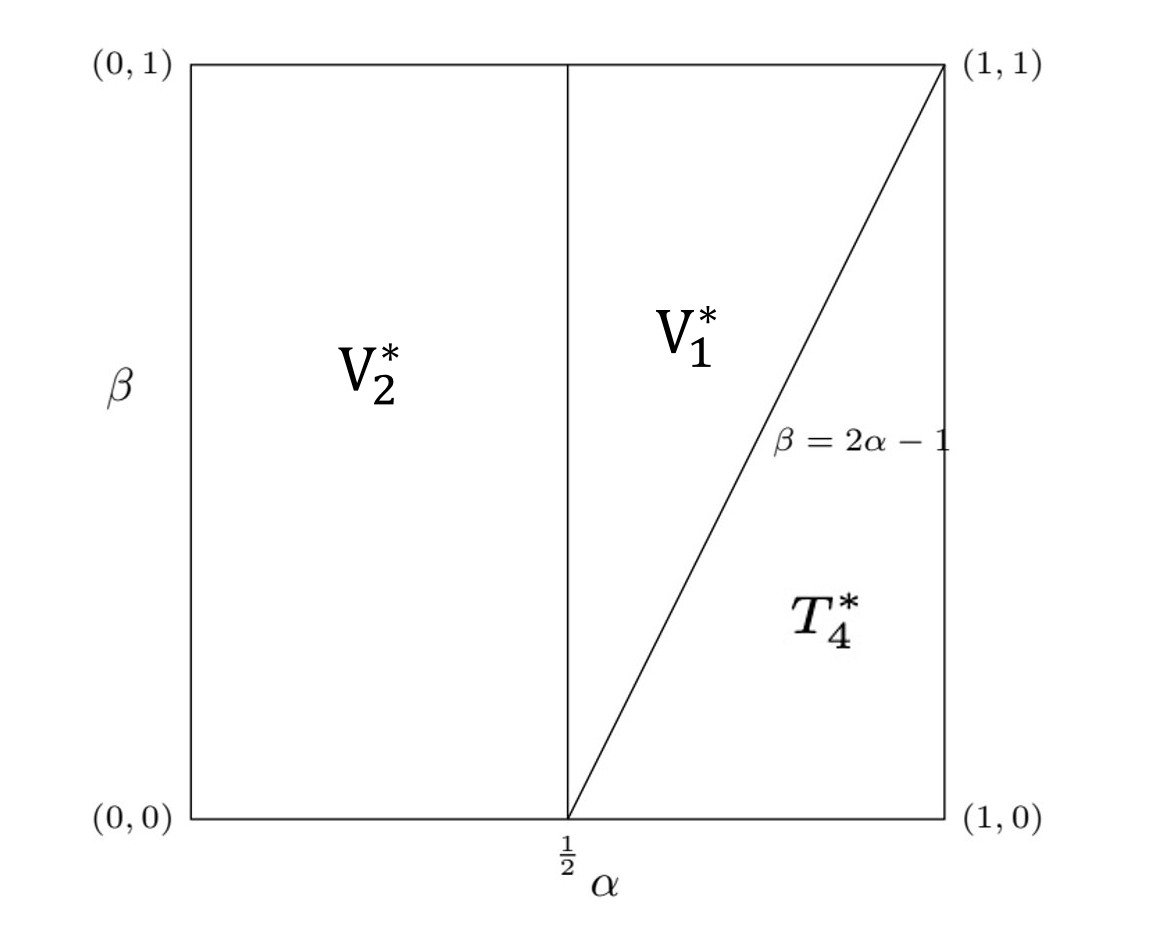}
		\caption{Partition of $V_i^*,i=1,2$}
		\label{8133}
	\end{minipage}
\end{figure}
The regions of {$V_i^*,i=1,2$}  are shown in Figure \ref{8133}.
The second main result is as follows.
\begin{theorem}\label{m00}
	Let $m=0$ and $V_i^*,i=1,2$ be defined as in \eqref{par2}.
	Then semigroup $e^{\mathcal{A}t}$ associated with (\ref{105}) satisfies
	\begin{enumerate}
		\item[{\rm (i)}]  It is polynomially stable of order $\displaystyle
		k_1^*=\frac{\alpha}{2\alpha-\beta} $ in $V_1^*$;
		
		\item[{\rm (ii)}]  It is polynomially stable of order $\displaystyle
		k_2^*=\frac{1}{2(2-2\alpha-\beta)} $ in $V_2^*$.
		
	\end{enumerate}
\end{theorem}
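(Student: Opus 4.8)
The plan is to apply the Borichev--Tomilov frequency-domain characterization \cite{bobtov1}: since $0\in\rho(\mathcal A)$ (Lemma \ref{t-1-1}) and $i\mathbb R\subset\rho(\mathcal A)$ (Lemma \ref{l202}), the semigroup $e^{\mathcal At}$ is polynomially stable of order $k>0$ if and only if
$$\limsup_{|\lambda|\to\infty}\ \frac{1}{|\lambda|^{1/k}}\,\big\|(i\lambda I-\mathcal A)^{-1}\big\|_{\mathcal L(\mathcal H)}<\infty .$$
So the whole task reduces to a resolvent estimate on the imaginary axis: for $F=(f_1,f_2,f_3,f_4)^\top\in\mathcal H$ and $U=(u,v,\theta,q)^\top\in\mathcal D(\mathcal A)$ solving $(i\lambda I-\mathcal A)U=F$, I must show $\|U\|_{\mathcal H}\le C|\lambda|^{1/k_i^*}\|F\|_{\mathcal H}$ on $V_i^*$ for $i=1,2$, with $k_1^*=\alpha/(2\alpha-\beta)$ and $k_2^*=1/(2(2-2\alpha-\beta))$. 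Writing the system out componentwise (with $m=0$),
$$i\lambda u-v=f_1,\quad i\lambda v+\sigma Au-A^\alpha\theta=f_2,\quad i\lambda\theta+A^\alpha v-A^{\beta/2}q=f_3,\quad i\lambda\tau q+q+A^{\beta/2}\theta=f_4 .$$

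First I would extract the basic dissipation: taking the real part of $\langle(i\lambda I-\mathcal A)U,U\rangle_{\mathcal H}$ gives $\|q\|^2=\Re\langle F,U\rangle_{\mathcal H}\le\|F\|_{\mathcal H}\|U\|_{\mathcal H}$, so $\|q\|^2\lesssim\|F\|\|U\|$. From the $q$-equation, $A^{\beta/2}\theta=f_4-(1+i\lambda\tau)q$, hence $\|A^{\beta/2}\theta\|\lesssim\|F\|+|\lambda|\,\|q\|$, which after the basic bound yields control of $\|A^{\beta/2}\theta\|$ in terms of $|\lambda|^{1/2}\|F\|^{1/2}\|U\|^{1/2}$ (plus lower-order terms). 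Next, from the $\theta$-equation I get $A^\alpha v = f_3-i\lambda\theta+A^{\beta/2}q$; testing this against $v$ and using $\|\theta\|$ (recovered from $\|A^{\beta/2}\theta\|$ and, where $\beta=0$, directly), I can produce an estimate for $\|A^{\alpha/2}v\|^2$ — this is the step where the coupling exponent $\alpha$ enters. Then I substitute $v=i\lambda u-f_1$ and use the $v$-equation to control $\sigma\|A^{1/2}u\|^2=\Re\langle\sigma Au,u\rangle$ via $\langle A^\alpha\theta-f_2-i\lambda v,\,u\rangle$; the term $\langle i\lambda v,u\rangle$ combines with $\|v\|^2$ to give $\|v\|^2-\sigma\|A^{1/2}u\|^2$ up to good terms, so the real part of a suitably chosen test identity isolates $\|A^{1/2}u\|^2$. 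The key technical device throughout is interpolation: bounds on $\|A^{\beta/2}\theta\|$ and $\|\theta\|$ interpolate to bound $\|A^{s}\theta\|$ for intermediate $s$, and similarly for $v$ and $u$, which is how the various $A^\alpha$, $A^{\beta/2}$ couplings get absorbed and how the precise exponents $2\alpha-\beta$ and $2-2\alpha-\beta$ emerge from balancing powers of $|\lambda|$.

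The delicate part is the bookkeeping of the powers of $|\lambda|$: one has to carefully track, in each of $V_1^*$ and $V_2^*$, which term dominates and choose the interpolation parameters so that every estimate closes, i.e. every occurrence of $\|U\|_{\mathcal H}$ on the right-hand side appears with a power $<1$ (so it can be absorbed) times a power of $|\lambda|$ no larger than $|\lambda|^{1/k_i^*}$. In $V_1^*$ (where $\alpha\ge 1/2$, so the coupling is "strong") the bottleneck is the recovery of $\|A^{1/2}u\|$ and $\|v\|$ from the wave-type pair, mediated by $\theta$ at the cost of $A^\alpha$; in $V_2^*$ (where $\alpha<1/2$ and $\beta$ can be $0$ or $1$) the bottleneck is instead recovering $\theta$ itself and feeding it back, and the endpoint cases $\beta=0$ (on $F_2^*$) and $\beta=1$ (on $L_{23}^*$) need slightly different handling since $A^{\beta/2}$ degenerates to the identity or loses no regularity. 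I would structure the proof as two lemmas, one per region, each reducing via Borichev--Tomilov to a resolvent inequality, and in each lemma argue by contradiction using a normalized sequence $\lambda_n\to\infty$, $\|U_n\|_{\mathcal H}=1$, $|\lambda_n|^{1/k_i^*}\|F_n\|_{\mathcal H}\to 0$, then derive $\|U_n\|_{\mathcal H}\to 0$, contradicting normalization. This mirrors the strategy used for the inertial case (Theorem \ref{t-3-1}) in \Cref{8.8}, and indeed Theorem \ref{m00} can be viewed as the $m=0$ specialization, though the region partition and some interpolation exponents change because the second component now lives in $H$ rather than $\mathcal D(A^{\gamma/2})$.
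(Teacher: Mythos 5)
Your proposal follows essentially the same route as the paper's proof in Section~\ref{8.7}: the Borichev--Tomilov reduction, a contradiction argument with a normalized sequence $\lambda_n\to\infty$, $\|U_n\|_{\mathcal H}=1$, and then the same chain of estimates (dissipation controls $q$, the fourth equation controls $A^{\beta/2}\theta$, interpolation recovers $\|\theta\|$, testing the third equation against a suitable power of $A$ applied to $v$ recovers $\|v\|$, and the energy identity from the second equation recovers $\|A^{1/2}u\|$), with the exponents $2\alpha-\beta$ and $2-2\alpha-\beta$ emerging from the interpolation bookkeeping exactly as you describe. The only detail worth flagging is that in $V_1^*$ the paper obtains $\|\theta_n\|=o(1)$ by interpolating between $\|A^{-\alpha}\theta_n\|=\lambda_n^{-1}O(1)$ (read off from the third equation) and $\|A^{\beta/2}\theta_n\|$, rather than between $\|\theta\|$ and $\|A^{\beta/2}\theta\|$ as your sketch suggests, but this is a refinement of, not a departure from, your plan.
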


Furthermore, through a detailed spectral analysis, we can show the sharpness of the decay rates obtained in the above two theorems.

\begin{theorem}\label{th-o}
	The orders of polynomial decay in \Cref{t-3-1} and \Cref{m00} are all optimal.
\end{theorem}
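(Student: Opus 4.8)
## Proof Proposal for Theorem \ref{th-o}

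The plan is to establish optimality by showing that for each region, the polynomial decay rate cannot be improved, which by the Borichev--Tomilov characterization \cite{bobtov1} is equivalent to finding a sequence of frequencies $\lambda_n \in \mathbb{R}$ with $|\lambda_n| \to \infty$ and a corresponding sequence $U_n \in \mathcal{D}(\mathcal{A})$ with $\|U_n\|_{\mathcal{H}} \sim 1$ such that $\|(i\lambda_n I - \mathcal{A})U_n\|_{\mathcal{H}} \lesssim |\lambda_n|^{-1/k}$, where $k$ is the claimed order in the respective region. Equivalently (and this is the route I would take), it suffices to show that the resolvent satisfies $\limsup_{|\lambda|\to\infty} |\lambda|^{-1/k}\|(i\lambda I - \mathcal{A})^{-1}\|_{\mathcal{L}(\mathcal{H})} > 0$, i.e., the resolvent growth bound along the imaginary axis is no better than $|\lambda|^{1/k}$. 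The natural test functions are built from eigenvectors of $A$: fix an orthonormal sequence $\{e_n\}$ with $Ae_n = \mu_n e_n$, $\mu_n \to \infty$, and seek a solution of the resolvent equation $(i\lambda - \mathcal{A})U = F$ with $F$ supported on $e_n$ (typically $F = (0,0,0,e_n)^\top$ or $F=(0, e_n, 0, 0)^\top$), choosing $\lambda = \lambda_n$ to be (near) the imaginary part of an eigenvalue branch of $\mathcal{A}$.

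First I would carry out the characteristic-root analysis underpinning the partition: writing $U = (u,v,\theta,q)$ on the $n$-th mode reduces \eqref{105} to a scalar polynomial equation in $\lambda$ with coefficients depending on $\mu_n^\alpha, \mu_n^\beta, \mu_n^\gamma, \mu_n$. This is exactly the spectral analysis promised in Section \ref{8.6}; I would extract the asymptotic expansion of the relevant eigenvalue branch $\lambda_n = i a_n + b_n + o(\cdot)$ as $\mu_n \to \infty$, where $a_n \to \infty$ is the oscillation frequency and $-b_n > 0$ is the (small) real part governing dissipation. The key quantitative fact is the decay rate of the real part: in region $V_1$ one expects $\Re \lambda_n \sim -\mu_n^{2\alpha - \beta - \gamma}/|\lambda_n|^{2} \cdot(\text{const})$ after matching powers, and more precisely $\Re\lambda_n \asymp |\lambda_n|^{-1/k_1}$ with $k_1 = \frac{2\alpha-\gamma}{2(2\alpha-\beta-\gamma)}$; analogous expansions give $|\lambda_n|^{-1/k_2}$ and $|\lambda_n|^{-1/k_3}$ in $V_2, V_3$, and the $m=0$ analogues for $k_1^*, k_2^*$. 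Plugging the approximate eigenvector $U_n$ (the null vector of $i a_n - \mathcal{A}_n$ on the mode, suitably normalized in $\mathcal{H}$) into $(ia_n - \mathcal{A})U_n$ leaves a residual of size $\asymp |\Re\lambda_n|\asymp |\lambda_n|^{-1/k}$, which by Borichev--Tomilov forbids any faster decay; combined with Theorems \ref{t-3-1} and \ref{m00} this pins the rate exactly.

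The steps in order: (1) set up the modal resolvent equation and derive the characteristic polynomial; (2) for each of $V_1, V_2, V_3$ (and $V_1^*, V_2^*$) identify which monomials balance to leading order, determine $a_n$ and the first correction $b_n = \Re\lambda_n$, and verify $|b_n|^{-1}$ grows precisely like $|a_n|^{1/k}$; (3) construct the normalized quasi-eigenvector $U_n$, estimate each component in the $\mathcal{H}$-norm using $\mathcal{D}(A^{s})$-norms $\sim \mu_n^s$, and confirm $\|U_n\|_{\mathcal{H}} \asymp 1$; (4) compute $\|(ia_n - \mathcal{A})U_n\|_{\mathcal{H}}$ and show it is $\asymp |b_n| \asymp |a_n|^{-1/k}$; (5) invoke \cite[Theorem 2.4 or the standard lower-bound converse]{bobtov1} to conclude optimality, treating the boundary strata (planes $F_{ij}$, lines $L_{ijk}$) either by continuity of the exponents or by the same computation with the appropriate degenerate balance. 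The main obstacle I anticipate is step (2) on the transition sets and on the "double-root" regions such as $V_3$ (where $\beta + \gamma > 2\alpha$ forces a different dominant balance): there the characteristic polynomial can have near-coincident roots or a genuinely different Newton-polygon vertex, so the asymptotic matching must be done carefully to avoid a spurious cancellation in the leading term of $\Re\lambda_n$; a secondary difficulty is checking that the neglected lower-order terms in the quasimode residual are genuinely smaller than $|b_n|$ in every sub-case, which requires tracking the second correction in the expansion of $\lambda_n$.
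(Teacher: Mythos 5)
Your proposal is correct and follows essentially the same route as the paper: reduce to the modal characteristic (quartic) equation, expand the eigenvalue branches asymptotically in each subregion to get $\Re\lambda_n \asymp |\Im\lambda_n|^{-1/k}$, and convert this into the resolvent lower bound $\limsup_{|\lambda|\to\infty}|\lambda|^{-1/k}\|(i\lambda-\mathcal{A})^{-1}\|>0$ that, by Borichev--Tomilov, forbids any faster decay. The paper carries this out by tabulating the explicit root asymptotics (Tables 5.1--5.2, under the normalization $\sigma=2$, $\tau=1$) and citing the standard eigenvalue-to-resolvent argument of \cite[Corollary 4.7]{haoliu2}, which is the same mechanism as your quasimode construction.
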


\begin{remark}
In  Theorems \ref{t-3-1} and \ref{m00}, we provide a comprehensive analysis of polynomial stability for system \eqref{101} under varying coupling effects, thermal damping and inertial characteristics.
When $m>0$, signifying the presence of an inertial term in system \eqref{101},   we prove that the corresponding semigroup decays polynomially at different rates when parameters $ (\alpha,\; \beta,\; \gamma) $ belong to different subregions.   In the case where   there is no inertial term (i.e., $m=0$), we  establish
a relationship between the parameters $\alpha, \; \beta $ and the order of polynomial stability.
Our results complement the stability analysis of the system previously discussed in \cite{han}.
On the other hand,
it is worth noting that the decay rates obtained in  Theorem \ref{m00},
are slower when compared to the findings presented in \cite{haoliu2}. The work in \cite{haoliu2} primarily focuses on a thermoelastic system with Fourier's law while neglecting the inertial term.  This observation reveals that the thermal damping of Cattaneo's type is weaker than the Fourier's type.
\end{remark}

The proofs for  polynomial stability rely on the following lemma,
which gives necessary and sufficient conditions for the polynomial stability of $C_0$-semigroup (see \cite{bobtov1,liurao} or \cite{batty, Rozendaal} for more general case).

\begin{lemma}\label{l-2-1}
	Assume that $\mathcal{A}$ generates a $C_0$-semigroup $e^{\mathcal{A}t}$ of contraction  on a Hilbert space $\mathcal{H}$ and satisfies   $i\mathbb{R}\subset \rho(\mathcal{A})$.
	Then, $e^{\mathcal{A}t}$ is polynomially stable  with order $ k $, if and only if
	\begin{equation}\label{e2} \limsup_{|s|\to\infty}\limits s^{-{1\over k}}\|(is-\mathcal{A})^{-1}\|_{\mathcal{L}(\mathcal{H})}<\infty.
	\end{equation}
\end{lemma}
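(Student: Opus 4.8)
The statement is the Borichev--Tomilov characterisation of polynomial decay, so the cheapest route is simply to cite \cite{bobtov1,liurao}; if one wanted to reprove it, the plan is to handle the two implications by rather different means. The \emph{necessity} of \eqref{e2} (given polynomial stability of order $k$) is the elementary half and uses nothing about the Hilbert structure. Starting from $\|e^{\mathcal{A}t}\mathcal{A}^{-1}\|_{\mathcal{L}(\mathcal{H})}\le Ct^{-k}$, I would first boost it to $\|e^{\mathcal{A}t}\mathcal{A}^{-n}\|_{\mathcal{L}(\mathcal{H})}\le\|e^{\mathcal{A}t/n}\mathcal{A}^{-1}\|_{\mathcal{L}(\mathcal{H})}^{\,n}\le C_n t^{-nk}$ for $n\in\mathbb{N}$, using that $e^{\mathcal{A}t}$ is a contraction semigroup commuting with $\mathcal{A}^{-1}$ and splitting $e^{\mathcal{A}t}=(e^{\mathcal{A}t/n})^n$. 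Next, from $\mathcal{A}(is-\mathcal{A})^{-1}=is(is-\mathcal{A})^{-1}-I$ one gets the identity
\[
(is-\mathcal{A})^{-1}=(is)^{n}(is-\mathcal{A})^{-1}\mathcal{A}^{-n}-\sum_{j=1}^{n}(is)^{j-1}\mathcal{A}^{-j},
\]
so it remains to estimate $(is-\mathcal{A})^{-1}\mathcal{A}^{-n}=\int_0^\infty e^{-ist}e^{\mathcal{A}t}\mathcal{A}^{-n}\,dt$. A van der Corput type argument — split this oscillatory integral at a cut-off $T$, integrate by parts on the inner part using $\tfrac{d}{dt}(e^{\mathcal{A}t}\mathcal{A}^{-n})=e^{\mathcal{A}t}\mathcal{A}^{-(n-1)}$, bound the far tail by the $t^{-nk}$ decay, and optimise $T\asymp|s|^{1/k}$ — gives $\|(is-\mathcal{A})^{-1}\mathcal{A}^{-n}\|_{\mathcal{L}(\mathcal{H})}=O(|s|^{1/k-n})$. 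Taking $n=\lfloor 1/k\rfloor+1$ (so $n>1/k\ge n-1$) makes the leading term of the identity $O(|s|^{1/k})$ and each remaining term $O(|s|^{n-1})=O(|s|^{1/k})$, which is \eqref{e2}.

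The \emph{sufficiency} (given \eqref{e2}, conclude polynomial stability of order $k$) is the substantive half, and here the Hilbert space hypothesis is essential. I would fix $x\in\mathcal{H}$, set $u(t):=e^{\mathcal{A}t}\mathcal{A}^{-1}x$, and study its Laplace transform $\widehat u(\lambda)=(\lambda-\mathcal{A})^{-1}\mathcal{A}^{-1}x$: it is holomorphic on $\{\Re\lambda>0\}$, extends holomorphically across $i\mathbb{R}$ because $i\mathbb{R}\subset\rho(\mathcal{A})$, and by $(is-\mathcal{A})^{-1}\mathcal{A}^{-1}=\tfrac{1}{is}\big((is-\mathcal{A})^{-1}+\mathcal{A}^{-1}\big)$ together with \eqref{e2} it satisfies
\[
\|\widehat u(is)\|_{\mathcal{H}}\le C(1+|s|)^{1/k-1}\|x\|_{\mathcal{H}},\qquad \|\widehat u(\lambda)\|_{\mathcal{H}}\le\frac{C}{\Re\lambda}\,\|x\|_{\mathcal{H}}\quad(\Re\lambda>0).
\]
Representing $u(t)$ as the inverse Laplace transform along a shifted vertical line $\Re\lambda=\varepsilon$, estimating the resulting integral by the Plancherel theorem (the one place the Hilbert structure is used) with the two bounds above interpolated by Phragm\'en--Lindel\"of, and finally optimising $\varepsilon\asymp t^{-1}$, one arrives at $\|u(t)\|_{\mathcal{H}}=O(t^{-k})\,\|x\|_{\mathcal{H}}$, i.e.\ $\|e^{\mathcal{A}t}\mathcal{A}^{-1}\|_{\mathcal{L}(\mathcal{H})}=O(t^{-k})$.

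The hard part is precisely this last Plancherel/contour step: the naive version — noticing that $\widehat u(i\,\cdot)\in L^2(\mathbb{R};\mathcal{H})$ only when $1/k<1/2$, deducing $u\in L^2(0,\infty;\mathcal{H})$, and then using $\|u(t)\|_{\mathcal{H}}\le\|u(s)\|_{\mathcal{H}}$ for $s\le t$ to get $\|u(t)\|_{\mathcal{H}}=o(t^{-1/2})$ — is far too lossy, since it neither reaches the exponent $k$ nor covers $1/k\ge 1/2$. One must instead run the estimate on shifted contours with the weight matching the resolvent exponent $1/k$; it is exactly this refinement that eliminates the logarithmic loss a general Banach-space argument would incur (cf.\ the Batty--Duyckaerts bound). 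Along the way one also checks the routine prerequisites for shifting contours (holomorphy of $\widehat u$, decay along vertical lines, continuity up to $i\mathbb{R}$), which follow from the boundedness of $e^{\mathcal{A}t}$ and from $i\mathbb{R}\subset\rho(\mathcal{A})$. Since the hypotheses of the lemma are exactly those of Borichev--Tomilov, invoking \cite{bobtov1} is of course an equally valid way to finish.
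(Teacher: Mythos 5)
The paper gives no proof of this lemma: it is quoted as the Borichev--Tomilov theorem and justified solely by the citations \cite{bobtov1,liurao} (with \cite{batty,Rozendaal} for more general versions), so your primary suggestion to simply cite is exactly what the paper does. Your accompanying sketch --- necessity via the iterated resolvent identity $(is-\mathcal{A})^{-1}=(is)^{n}(is-\mathcal{A})^{-1}\mathcal{A}^{-n}-\sum_{j=1}^{n}(is)^{j-1}\mathcal{A}^{-j}$ combined with a split, integrated-by-parts Laplace-transform estimate optimised at $T\asymp|s|^{1/k}$, and sufficiency via Plancherel on shifted contours with the weight matching the exponent $1/k$ --- is a faithful outline of the argument in the cited source, correctly identifies where the Hilbert-space structure is indispensable, and contains no step that would fail, though it is of course a plan rather than a complete proof.
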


The following interpolation theorem will play a crucial role in our proof.
\begin{lemma}\label{lemma-inter}
	Let $A \,:\, {\cal D}(A) \subset H$ be self-adjoint and positive definite, $r,\;p\;,q\in \mathbb{R}$. Then
	\begin{align}\label{4.41}
	\|A^p x\| \le \|A^q x\|^{p-r\over q-r} \|A^r x\|^{q-p\over q-r},  \quad
	\forall \;  r\le p \le q, \; x\in {\cal D}(A^q).
	\end{align}
\end{lemma}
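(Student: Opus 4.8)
The plan is to establish Lemma~\ref{lemma-inter} by reducing the operator inequality to a scalar inequality via the spectral theorem. Since $A$ is self-adjoint and positive definite, it admits a spectral resolution $A=\int_{\sigma(A)} \lambda\, \mathrm{d}E_\lambda$ with $\sigma(A)\subset (0,\infty)$ (in fact $\sigma(A)\subset [c,\infty)$ for some $c>0$, though positivity alone suffices). For any real exponent $s$ and any $x\in \mathcal{D}(A^s)$ we have
\begin{equation*}
\|A^s x\|^2 = \int_{\sigma(A)} \lambda^{2s}\, \mathrm{d}\|E_\lambda x\|^2,
\end{equation*}
so the three quantities appearing in \eqref{4.41} are moments of the finite positive measure $\mathrm{d}\m_x(\lambda):=\mathrm{d}\|E_\lambda x\|^2$ against the weights $\lambda^{2p},\lambda^{2q},\lambda^{2r}$.

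The key step is the pointwise interpolation of exponents. For fixed $\lambda>0$ and $r\le p\le q$ with $q>r$, write $p=\th q+(1-\th) r$ where $\th=\frac{p-r}{q-r}\in[0,1]$; this is exactly the convex-combination identity underlying the stated exponents. Then $\lambda^{2p}=(\lambda^{2q})^{\th}(\lambda^{2r})^{1-\th}$. First I would integrate this identity against $\mathrm{d}\m_x$ and apply H\"older's inequality with conjugate exponents $1/\th$ and $1/(1-\th)$:
\begin{equation*}
\int \lambda^{2p}\, \mathrm{d}\m_x = \int (\lambda^{2q})^{\th}(\lambda^{2r})^{1-\th}\, \mathrm{d}\m_x \le \left(\int \lambda^{2q}\, \mathrm{d}\m_x\right)^{\th}\left(\int \lambda^{2r}\, \mathrm{d}\m_x\right)^{1-\th}.
\end{equation*}
Rewriting in operator norms gives $\|A^p x\|^2 \le \|A^q x\|^{2\th}\|A^r x\|^{2(1-\th)}$, and taking square roots together with $\th=\frac{p-r}{q-r}$, $1-\th=\frac{q-p}{q-r}$ yields precisely \eqref{4.41}. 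The degenerate case $q=r$ (forcing $p=q=r$) is trivial, and one should note the hypothesis $x\in\mathcal{D}(A^q)$ guarantees $\int\lambda^{2q}\,\mathrm{d}\m_x<\infty$, while $\mathcal{D}(A^q)\subset\mathcal{D}(A^p)\subset\mathcal{D}(A^r)$ (by the spectral theorem, since $\lambda^{2p}\le \max\{\lambda^{2q},\lambda^{2r}\}$ up to a constant on $\sigma(A)$) ensures all three integrals are finite, so H\"older's inequality applies legitimately.

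I do not anticipate a genuine obstacle here: the only mild care needed is to confirm the measure $\m_x$ is finite (it is, with total mass $\|x\|^2$) and that the weights are $\m_x$-integrable under the stated domain hypothesis, so that the application of H\"older's inequality is not vacuous or infinite. An alternative, fully self-contained route avoiding explicit spectral measures is to set $y=A^{r}x$, $B=A^{q-r}$ (self-adjoint, positive definite), and $t=\frac{p-r}{q-r}\in[0,1]$, then prove the interpolation inequality $\|B^{t}y\|\le \|B y\|^{t}\|y\|^{1-t}$ for $t\in[0,1]$ first, and bootstrap; but the spectral-measure computation above is the cleanest and I would present that.
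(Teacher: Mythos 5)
Your proof is correct, but it takes a different route from the paper. The paper does not prove the interpolation inequality from scratch: it cites the case $0\le r\le p\le q$ from the reference \cite{liuzheng} and then handles general real exponents by a shift reduction --- setting $y=A^{-r}x$ (equivalently $x=A^{-r}y$ for $y\in\mathcal{D}(A^{q-r})$), so that $\|A^p x\|=\|A^{p-r}y\|$ is controlled by the already-known inequality applied to the nonnegative exponents $0\le p-r\le q-r$. Your argument instead proves the full statement directly from the spectral resolution, writing $\|A^s x\|^2=\int\lambda^{2s}\,\mathrm{d}\|E_\lambda x\|^2$ and applying H\"older's inequality with exponents $1/\th$ and $1/(1-\th)$ where $\th=\frac{p-r}{q-r}$; the computation is correct, the degenerate case $q=r$ is handled, and your remark that positive definiteness (spectrum in $[c,\infty)$, $c>0$) gives the domain nesting $\mathcal{D}(A^q)\subset\mathcal{D}(A^p)\subset\mathcal{D}(A^r)$ is the right justification for the finiteness of all three integrals. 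What your approach buys is self-containedness and uniformity: one argument covers all real $r\le p\le q$ at once with no case split and no external citation. What the paper's approach buys is brevity, at the cost of leaning on the cited result and a (slightly terse) ``the other cases can be proved by the same argument.'' Amusingly, the alternative route you sketch at the end --- substituting $y=A^r x$ and $B=A^{q-r}$ to reduce to a normalized interpolation inequality --- is essentially the paper's own reduction; either presentation is acceptable.
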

\begin{proof}
	The case $ 0\leq r\le p \le q$ can be found in  \cite{liuzheng}.
	If $  r\leq 0  $ and $0\le p \le q$,  we let   $ y\in   \mathcal{D}(A^{q-r}) ,\; x = A^{-r}y.$  It follows from  (\ref{4.41}) that
	$$
	\|A^p x\| = \|A^{p-r}y\| \le \|A^{q-r}y\|^{{p-r\over q-r}}\|  y\|^{q-p\over q-r}=\|A^q x\|^{p-r\over q-r} \|A^r x\|^{q-p\over q-r}.
	$$
	The other cases can be proved by the same argument.
\end{proof}

\section{Polynomial Stability of the System  with an Inertial Term (Proof of  \Cref{t-3-1})}\label{8.8}
\setcounter{equation}{0}
This section is devoted to analyzing the polynomial stability of system \eqref{101} with $m>0$.
Due to   {Lemmas}  \ref{l202} and \ref{l-2-1}, it is sufficient to show that there exists a constant $r > 0$
such that
\begin{equation}\label{huang2}
\inf \big\{
\lambda^{k}\|i\,\lambda U- {\cal A} U\|_{\cal H} \,\big|\quad
\|U\|_{\cal H}=1,\: \lambda\in{\mathbb R} \big\} \geqslant  r
\end{equation}
holds for some $k>0$.

 By contradiction, we suppose that \eqref{huang2} fails.     Then,
there at least  {exists}  one sequence $\{ \lambda_{n},  U_{n}  \}_{n=1}^\infty$, where $\lambda_{n}\in {\mathbb R} ,\;  U_n:=(u_{n},\, {v}_{n},\, \theta_{n},\, q_{n})^\top\in \mathcal{D}({\cal A})$ such that
\begin{equation}\label{unitnorm}
\|U_{n}\|_{\cal H}  =\big\| (u_{n},\, {v}_{n},\, \theta_{n},\, q_{n})^\top\big\|_{ \cal H }  =1,
\end{equation}
and as $n\to\infty$,
\begin{align}&
\lambda_{n}\to\infty,\nonumber
\\ \noalign{\medskip} & \displaystyle
\label{331}
\lambda_{n}^{k}\big( i\, \lambda_{n} U_{n}- {\cal A}   U_{n} \big) =o(1)
\quad  \;  \mbox{ in  }  \; {\cal H} ,
\end{align}
i.e.,
\begin{eqnarray}
&&\lambda_n^k(i\lambda_nA^{\frac{1}{2}}u_n- A^{\frac{1}{2}}v_n)=o(1)\quad  \;  \mbox{ in  }  \;  H,\label{s1+}\\   \noalign{\medskip}  \displaystyle
&&\lambda_n^kA^{-\frac{\gamma}{2}}\left(i\lambda_nv_n+i\lambda_n m A^{\gamma}v_n+\sigma A u_n-A^{\alpha}\theta_n\right)=o(1)\quad  \;  \mbox{ in  }  \;  H,\label{s2+}\\\noalign{\medskip}  \displaystyle
&&\lambda_n^k(i\lambda_n\theta_n-A^{\frac{\beta}{2}}q_n+A^\alpha v_n)=o(1)\quad  \;  \mbox{ in  }  \;  H,\label{s3+}\\\noalign{\medskip}  \displaystyle
&&\lambda_n^k(i\lambda_n\tau  q_n+ q_n+A^{\frac{\beta}{2}}\theta_n)=o(1)\quad  \;  \mbox{ in  }  \;  H.\label{s4+}
\end{eqnarray}

\begin{lemma}\label{lemma-1}
	For $m\ge 0$, one has
	\begin{eqnarray}\label{v1_0}
	&&	\|q_n\|= \lambda_n^{-\frac{k}{2}} o(1),
	\\ \noalign{\medskip}  \displaystyle
	\label{v1_8}
	&& \| A^{\frac{\beta}{2}}\theta_n\|=\lambda_n^{1-\frac{k}{2}}o(1),
	\\ \noalign{\medskip}  \displaystyle
	\label{+211}
	&&	\|A^{\frac{1}{2}}u_n\|,\; \|v_n\|,\;   \|\theta_n\|,\; \|\lambda_n^{-1}A^{\frac{1}{2}}v_n\|
	=O(1).
	\end{eqnarray}
	Furthermore, when $m>0,$ it holds
	\begin{equation}
	\label{+211-1}
	\|A^{\gamma \over2}v_n\|,\;\;\|\lambda_n A^{\gamma \over2}u_n\|
	=O(1) .
	\end{equation}
\end{lemma}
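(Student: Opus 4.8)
The plan is to exploit the dissipation identity together with the scaled resolvent equations \eqref{s1+}--\eqref{s4+}, extracting information from each equation in turn and combining them via the interpolation inequality of \Cref{lemma-inter}.

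\textbf{Step 1: The $q_n$ bound from dissipativity.} First I would take the inner product of \eqref{331} with $U_n$ in $\mathcal{H}$. Since $\Re\langle \mathcal{A}U_n, U_n\rangle_{\mathcal H} = -\|q_n\|^2$ and $\Re\langle i\lambda_n U_n, U_n\rangle_{\mathcal H}=0$, taking real parts gives $\lambda_n^k\|q_n\|^2 = \Re\langle \lambda_n^k(i\lambda_n U_n - \mathcal{A}U_n), U_n\rangle_{\mathcal H} = o(1)$ by \eqref{331} and \eqref{unitnorm}. Hence $\|q_n\|^2 = \lambda_n^{-k}o(1)$, which is exactly \eqref{v1_0}.

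\textbf{Step 2: The $A^{\beta/2}\theta_n$ bound.} Next I would use \eqref{s4+}: rewriting, $\lambda_n^k A^{\beta/2}\theta_n = -\lambda_n^k(i\lambda_n\tau q_n + q_n) + o(1)$. Dividing by $\lambda_n^{k}$ and using \eqref{v1_0} to control $\|q_n\|$, the dominant term is $\lambda_n\|q_n\| = \lambda_n\cdot\lambda_n^{-k/2}o(1) = \lambda_n^{1-k/2}o(1)$, while $\|q_n\| = \lambda_n^{-k/2}o(1)$ is of lower order; so $\|A^{\beta/2}\theta_n\| = \lambda_n^{1-k/2}o(1)$, giving \eqref{v1_8}.

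\textbf{Step 3: The $O(1)$ bounds.} These should follow by unpacking $\|U_n\|_{\mathcal H} = 1$: by definition of the norm on $\mathcal{H}$, the quantities $\sigma\|A^{1/2}u_n\|^2$, $m\|A^{\gamma/2}v_n\|^2$, $\|v_n\|^2$, $\|\theta_n\|^2$, $\tau\|q_n\|^2$ sum to $1$, so each is $O(1)$; in particular $\|A^{1/2}u_n\|, \|v_n\|, \|\theta_n\| = O(1)$ for all $m\geq 0$, and $\|A^{\gamma/2}v_n\| = O(1)$ when $m>0$. For $\|\lambda_n^{-1}A^{1/2}v_n\|$ I would use \eqref{s1+}: $A^{1/2}v_n = i\lambda_n A^{1/2}u_n + \lambda_n^{-k}o(1)$, so $\lambda_n^{-1}A^{1/2}v_n = iA^{1/2}u_n + \lambda_n^{-1-k}o(1) = O(1)$ since $\|A^{1/2}u_n\|=O(1)$. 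For \eqref{+211-1}, $\|A^{\gamma/2}v_n\|=O(1)$ is already obtained; then again from \eqref{s1+}, $\lambda_n A^{\gamma/2}u_n = -iA^{\gamma/2}v_n + i\lambda_n^{-k}o(1) = O(1)$ (here $A^{\gamma/2}$ applied to \eqref{s1+} after multiplying by $A^{(\gamma-1)/2}$, using that $\gamma\leq 1$ so $A^{(\gamma-1)/2}$ is bounded), which yields $\|\lambda_n A^{\gamma/2}u_n\| = O(1)$.

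\textbf{Main obstacle.} The delicate point is bookkeeping the powers of $\lambda_n$ consistently and making sure each manipulation is legitimate at the operator-domain level (e.g., that $A^{(\gamma-1)/2}$ or $A^{-\gamma/2}$ applied to the equations lands in $H$), and in particular that the $o(1)$ terms genuinely remain $o(1)$ after multiplication by bounded operators and the indicated powers of $\lambda_n$. The $m>0$ restriction in \eqref{+211-1} is essential since only then does $\|A^{\gamma/2}v_n\|$ appear in the unit-norm identity; this is why it is stated separately.
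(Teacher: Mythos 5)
Your proposal is correct and follows essentially the same route as the paper: \eqref{v1_0} from dissipativity applied to \eqref{331}, \eqref{v1_8} from \eqref{s4+} combined with \eqref{v1_0}, and the $O(1)$ bounds from the unit-norm condition \eqref{unitnorm} together with \eqref{s1+} (after applying the bounded operator $A^{(\gamma-1)/2}$ for \eqref{+211-1}). The paper states this in one line; your version simply supplies the details, all of which check out.
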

\begin{proof}
	We can obtain \eqref{v1_0} {by \eqref{331} and the fact that $\mathcal{A}$ is  dissipative.} Then, combining
	\eqref{s4+} and \eqref{v1_0} yields \eqref{v1_8}. \eqref{+211} and \eqref{+211-1} are clear from \eqref{unitnorm} and \eqref{s1+}.
\end{proof}

{Now, we shall show (i)--(iii) in \Cref{t-3-1}, respectively.}

\textbf{ (i) } Assume $(\alpha, \, \beta,\, \gamma) \in V_1$ and let $  k = \frac{2( 2\alpha-\beta-\gamma)}{2\alpha-\gamma}$.

From  \eqref{s3+}  and the assumption    $ - \alpha+ \frac{\gamma}{2}<-{ \beta\over 2}\leq 0$,  we have   that
$$ i\lambda_nA^{-\alpha+\frac{\gamma}{2}}\theta_n-A^{-\alpha+\frac{\gamma}{2}+\frac{\beta}{2}}q_n+A^{\frac{\gamma}{2}} v_n =o(1).
$$
Combining these with {\eqref{v1_0} and \eqref{+211-1} } yields
\begin{equation}\label{v1_1}
\|A^{-\alpha+\frac{\gamma}{2}}\theta_n\|=\lambda_n^{-1}O(1).
\end{equation}
By using Lemma \ref{lemma-inter}, \eqref{v1_8}, \eqref{v1_1} and the {choice} of $k$ given above, one has
\begin{equation}\label{v1_2}
\left\| \theta_n\right\| \leq \|A^{\frac{\beta}{2}} \theta_n\|^{\frac{2\alpha-\gamma}{2\alpha+\beta-\gamma}} \| A^{-\alpha+\frac{\gamma}{2}} \theta_n \| ^{\frac{\beta}{2\alpha+\beta-\gamma}}=o(1).
\end{equation}

\noindent Note that $A^{-\alpha+\frac{\gamma}{2}}\theta_{n}$ and $A^{-\alpha}(I+mA^{\gamma})v_n$ are bounded due to
$-\alpha+\frac{\gamma}{2}<0$ and \eqref{+211-1}. Then, {taking the inner product of \eqref{s2+} with $\lambda_n^{-k}A^{-\alpha+\frac{\gamma}{2}}\theta_{n}$, \eqref{s3+} with $\lambda_n^{-k}A^{-\alpha}(I+mA^{\gamma})v_n$, respectively,} we have
\begin{equation*}\label{v1_3}
(\theta_{n},i\lambda_nA^{-\alpha}(I+{m}A^{\gamma})v_n)+\sigma(A^{1-\alpha}\theta_n,u_n)-(\theta_{n},\theta_{n})=o(1),
\end{equation*}
\begin{equation*}\label{v1_4}
(i\lambda_n\theta_n, A^{-\alpha}(I+mA^{\gamma})v_n)-(A^{\frac{\beta}{2}-\alpha}(I+{m}A^{\gamma})q_n, v_n)+((I+mA^{\gamma})v_n, v_n)=o(1).
\end{equation*}
Adding the above two equations implies
\begin{equation}\label{v1_5}
\sigma(A^{1-\alpha}\theta_n,\, u_n)-(A^{\frac{\beta}{2}-\alpha}(I+mA^{\gamma})q_n,\,  v_n) -\|\theta_{n}\|^2+\|v_n\|^2+m\|A^{\frac{\gamma}{2}}v_n\|^2=o(1).
\end{equation}
Combining {$ \alpha>\max\{{1\over2}, {\beta+\gamma \over 2}\}$} and  \eqref{v1_0}, \eqref{+211}, \eqref{+211-1}, \eqref{v1_2}, one has
\begin{equation}
\label{vq}
(A^{{1\over2}-\alpha}\theta_n,\, A^{1\over2} u_n),\;\;
(A^{\frac{\beta+\gamma}{2}-\alpha}q_n,\, A^{-{\gamma\over 2}}(I+mA^{\gamma}) v_n) = o(1).
\end{equation}
Substituting \eqref{vq} into \eqref{v1_5}, {along with (\ref{v1_2})}, yields
\begin{equation}\label{v1_10}
\|v_n\|,~~\|A^{\frac{\gamma}{2}}v_n\| =o(1).
\end{equation}

\noindent Moreover, taking  the inner product of \eqref{s2+}  with ${\lambda_n^{-k}}A^{\frac{\gamma}{2}}u_n$ on $H$, we get
\begin{equation}\label{v1_6}
(i\lambda_nv_n, u_n) +(i\lambda_n{m}A^{\gamma}v_n, u_n) +\sigma (A u_n, u_n) -(A^{\alpha}\theta_n, u_n)\to 0,
\end{equation}
which together with \eqref{s1+} implies
\begin{equation}\label{316-}
-\|v_n\|^2-m\|A^{\frac{\gamma}{2}}v_n\|^2 +\sigma\|A^{\frac{1}{2}} u_n\|^2 -(A^{\alpha}\theta_n, u_n)
= o(1).
\end{equation}
{Thanks to} $\alpha -{\beta\over2} \le {1\over2} $, $\eqref{s1+}$ and \eqref{v1_8}, {we get}
\begin{eqnarray}\label{m0191}
(A^{\alpha}\theta_n,\, u_n)
&=& (A^{\frac{\beta}{2}}\theta_n,\, (i\lambda_n)^{-1}A^{\alpha-\frac{\beta}{2}} v_n + \lambda_n^{-1-k}o(1))\cr
&=& (A^{\frac{\beta}{2}}\theta_n,\, (i\lambda_n)^{-1}A^{\alpha-\frac{\beta}{2}} v_n) +{\lambda_n^{-\frac{3k}{2}}}o(1).
\end{eqnarray}
By $\eqref{s3+}$, {we know}
\begin{equation}\label{m019}
A^{\alpha-\frac{\beta}{2}} v_n= -i\lambda_nA^{-\frac{\beta}{2}}\theta_n+ q_n+ \lambda_n^{ -k}o(1).
\end{equation}
Thus, by substituting \eqref{m019} into \eqref{m0191} and using \eqref{v1_0}, \eqref{v1_8}, \eqref{v1_2}, we conclude that
\begin{equation}\label{m0192}
(A^{\alpha}\theta_n,\, u_n)=o(1).
\end{equation}
Combining \eqref{v1_10}, \eqref{316-} and \eqref{m0192} yields
\begin{equation}\label{m018}
\|A^{\frac{1}{2}}u_n\|={o(1)}.
\end{equation}
In summary, by \eqref{v1_0}, \eqref{v1_2}, \eqref{v1_10} and \eqref{m018}, we obtain that $\|U_n\|_{\mathcal{H}}=o(1)$, which contradicts the assumption \eqref{unitnorm}. The proof of   (i) is completed.


\vskip 4mm

\textbf{(ii)} Assume $(\alpha,\; \beta,\; \gamma)\in V_2$, and let $k= \frac{2(-2\alpha-\beta-\gamma+2)}{1-\gamma}$.

We first show that for  $\alpha <{1\over2}$,
\begin{equation}
\label{vv3}
\|A^{-\frac{1}{2}}(I+mA^{\gamma})v_n\| =\lambda_n^{-1}O(1).
\end{equation}
In fact, it follows from \eqref{s2+} that
\begin{equation}
\label{vv2}
\|\lambda_nA^{-\frac{1}{2}}(I+mA^{\gamma})v_n\| \le \sigma \|A^{1\over2} u_n\| +\|A^{-{1\over2}+\alpha}\theta_n\| + \lambda_n^{-k}o(1).
\end{equation}
Thus, \eqref{vv3} follows from \eqref{+211} and \eqref{vv2}.

Taking the inner product of \eqref{s3+} with $\lambda_n^{-1-k}\theta_n$ on $H$ yields
\begin{equation}\label{v2_0}
i\|\theta_n\|^2  -\lambda_n^{-1}(A^{\frac{\beta}{2}}q_n,\theta_{n})+\lambda_n^{-1}(A^{\alpha}v_n, \theta_{n})=\lambda_n^{-1-k}o(1).
\end{equation}
From  \eqref{v1_0} and \eqref{v1_8}, it is easy to see that
\begin{equation}
\label{qth}
| \lambda_n^{-1}(A^{\frac{\beta}{2}}q_n, \theta_n)| \leq\|q_n\|\|\lambda_n^{-1}A^{\frac{\beta}{2}}\theta_n\|=\lambda_n^{-k}o(1).
\end{equation}

\noindent
To deal with the  third term of \eqref{v2_0}, we use the Cauchy-Schwarz inequality and  \eqref{v1_8}  to get
\begin{equation}
\label{a}
|\lambda_n^{-1} (A^{\alpha}v_n,\theta_{n})| \leq \|A^{\alpha-\frac{\beta}{2}}v_n\|\|\lambda_n^{-1}A^{\frac{\beta}{2}}\theta_{n}\|
={\lambda_n}^{-\frac{k}{2}}\|A^{\alpha-\frac{\beta}{2}}v_n\|o(1) .
\end{equation}

\noindent
Furthermore, by Lemma \ref{lemma-inter},  \eqref{+211}, \eqref{+211-1},  \eqref{vv3} {and $\beta+\gamma<1$}, we deduce that
\begin{equation}
\label{vv}
\|A^{\alpha-\frac{\beta}{2}}v_n\| \leq
\left\{
\begin{array}{ll}
O(1) & \;  \mbox{when}  \; \; \displaystyle   0\leq \alpha\leq\frac{ \gamma}{2},
\\ \noalign{\medskip}  \displaystyle
{\|A^{\gamma-\frac{1}{2}}v_n\|^{1- \frac{2\alpha-\beta-2\gamma+1}{2(1-\gamma)}}\|A^{\frac{1}{2}}v_n\|^{\frac{2\alpha-\beta-2\gamma+1}{2(1-\gamma)}} =\lambda_n^{\frac{2\alpha-\beta-\gamma}{1-\gamma}}O(1)} & \;  \mbox{when}  \; \; \displaystyle    \frac{ \gamma}{2}<\alpha<{1\over2}.
\end{array}
\right.
\end{equation}

\noindent
Substituting \eqref{qth}-\eqref{vv} into \eqref{v2_0} and noticing the choice of $k$, we  have
\begin{equation}
\label{th1}
\|\theta_n\| = \left\{
\begin{array}{ll}
\lambda_n^{-{k\over4}}o(1) & \;  \mbox{when}  \; \; \displaystyle  0\leq \alpha\leq\frac{ \gamma}{2},
\\ \noalign{\medskip}  \displaystyle
\lambda_n^{\frac{2\alpha-1}{1-\gamma}}o(1) & \;  \mbox{when}  \; \; \displaystyle
\frac{ \gamma}{2}<\alpha<\frac{1}{2}.
\end{array}
\right.
\end{equation}

\noindent
Consequently, one has
\begin{equation}
\label{th2}
\|\theta_n\| = o(1).
\end{equation}
Now we are going to prove that
\begin{equation}
\label{vv1}
\|v_n\|, \;\; \|A^{\frac{\gamma}{2}}v_n\| =o(1).
\end{equation}

\noindent
Based on the estimation \eqref{th1}, we shall obtain \eqref{vv1} by considering the following two cases, respectively (Case ii-A and B).

{\bf Case ii-A.}  Assume $ \alpha\leq\frac{\gamma}{2}$.

Note that $\|A^{\frac{\beta-1}{2}}(I+ mA^{\gamma})v_n\| \le C\|A^{\frac{\beta+\gamma-1}{2}} A^{\gamma\over2}v_n\| =O(1) $ due to $\beta+\gamma<1$ and \eqref{+211-1}.
Then, we can take the inner product of \eqref{s3+} with ${\lambda_n^{-k}}A^{\frac{\beta-1}{2}}(I+mA^{\gamma})v_n$ to get
\begin{equation}\label{v2_3}
\begin{array}{l}
-(A^{\frac{\beta}{2}}\theta_n,i\lambda_nA^{-\frac{1}{2}}(I+mA^{\gamma})v_n) -( q_n,A^{\frac{2\beta-1}{2}}(I+mA^{\gamma})v_n)
\\ \noalign{\medskip}  \displaystyle +\|A^{\frac{2\alpha+\beta-1}{4}} v_n\|^2+m\|A^{\frac{2\alpha+2\gamma+\beta-1}{4}} v_n\|^2=\lambda_n^{-k}o(1).
\end{array}
\end{equation}

\noindent
By \eqref{+211} {and $\beta+\gamma<1$}, we have $$\|A^{\frac{2\beta-1}{2}}(I+mA^{\gamma})v_n\| \le C \|A^{ \beta+ \gamma -1} A^{{1\over2}}v_n\| =\lambda_n O(1).$$
{Then,} substituting {the above} and \eqref{v1_0}, \eqref{v1_8}, \eqref{vv3} into \eqref{v2_3}, we have
\begin{equation}\label{v2_4}
\|A^{\frac{2\alpha+\beta-1}{4}} v_n\|,~~\|A^{\frac{2\alpha+2\gamma+\beta-1}{4}} v_n\|=\lambda_n^{\frac{1}{2}-\frac{k}{4}}o(1)=\lambda_n^{\frac{2\alpha+\beta-1}{2(1-\gamma)}}o(1).
\end{equation}

\noindent  Thanks to $ \alpha\leq \frac{\gamma}{2}$, we easily obtain  $\frac{2\alpha+2\gamma+\beta-1}{4}<\frac{\gamma}{2}<\frac{1}{2}$, and thus, by interpolation, one gets
\begin{equation}\label{v2_5}
\|A^{\frac{\gamma}{2}}v_n\|\leq\|A^{\frac{2\alpha+2\gamma+\beta-1}{4}} v_n\|^{1-\frac{1-\beta-2\alpha}{3-\beta-2\alpha-2\gamma}}\|A^{\frac{1}{2}}v_n\|^{\frac{1-\beta-2\alpha}{3-\beta-2\alpha-2\gamma}}.
\end{equation}
Therefore, we obtain \eqref{vv1} by combining \eqref{+211}, \eqref{v2_4} and \eqref{v2_5}.

\noindent

{\bf Case ii-B.}  Assume $\frac{\gamma}{2}<\alpha<{1\over2}.$

Using  Lemma \ref{lemma-inter}, \eqref{+211-1} and  \eqref{vv3} yields
\begin{eqnarray}\label{v2_8} \|A^{-\alpha}(I+mA^{\gamma})v_n\|&\leq&
\|A^{-\frac{1}{2}}(I+mA^{\gamma})v_n\|^{\frac{2\alpha-\gamma}{1-\gamma}}
\|A^{-\frac{\gamma}{2}}(I+mA^{\gamma})v_n\|^{1-\frac{2\alpha-\gamma}{1-\gamma}}\crr
&=&\lambda_n^{\frac{-2\alpha+\gamma}{1-\gamma}}O(1).
\end{eqnarray}
Taking  the inner product of \eqref{s3+} with $  \lambda_n^{-k} A^{-\alpha}(I+mA^{\gamma})v_n$, we get
\begin{equation}\label{v2_7}
(i\lambda_n\theta_n, A^{-\alpha}(I+mA^{\gamma})v_n)-(q_n, A^{\frac{\beta}{2}-\alpha}(I+mA^{\gamma})v_n)+((I+mA^{\gamma})v_n, v_n)= o(1).
\end{equation}

\noindent
For the first term of \eqref{v2_7}, one can deduce from \eqref{th1} and \eqref{v2_8} that
\begin{equation}
\label{th3}(i\lambda_n\theta_n, A^{-\alpha}(I+mA^{\gamma})v_n)=o(1).
\end{equation}
\noindent
To estimate  the second term of \eqref{v2_7}, we notice that if $\frac{\gamma}{2}<\alpha\leq\frac{\beta+\gamma}{2}$,
\begin{equation}
\label{vv4}
\|A^{-\alpha+\frac{\beta}{2}+\gamma}v_n\|\leq
\|A^{\frac{\gamma}{2}}v_n\|^{1-\frac{-2\alpha+\beta+\gamma}{1-\gamma}}
\|A^{\frac{1}{2}}v_n\|^{\frac{-2\alpha+\beta+\gamma}{1-\gamma}}
=\lambda_n^{\frac{-2\alpha+\beta+\gamma}{1-\gamma}}O(1),
\end{equation}
where we have used {$\beta+\gamma<1$, Lemmas \ref{lemma-inter} and \ref{lemma-1}}. Therefore, from \eqref{v1_0}, \eqref{+211} and \eqref{vv4}, we can deduce that
\begin{equation}
\label{vv5}
|(q_n, A^{\frac{\beta}{2}-\alpha}(I+mA^{\gamma})v_n)|
\le \left\{
\begin{array}{ll}
C\|q_n\|\|A^{-\alpha+\frac{\beta}{2}+\gamma} v_n\|=\lambda_n^{2(\beta+\gamma-1)\over 1-\gamma}o(1),
& \displaystyle
{\gamma\over 2}<\alpha\leq\frac{\beta+\gamma}{2},
\\ \noalign{\medskip}  \displaystyle
\|A^{-\alpha+\frac{\beta+\gamma}{2}}q_n\|\|A^{-\frac{ \gamma}{2}}(I+mA^{ \gamma})v_n\|=\lambda_n^{-{k\over 2}}o(1),
& \displaystyle
\frac{\beta+\gamma}{2} < \alpha <{1\over2}.
\end{array}\right.
\end{equation}

\noindent
Therefore,  we get  \eqref{vv1} by substituting \eqref{th3} and \eqref{vv5} into \eqref{v2_7}.

Finally, similar to (i), we can show that the fourth term in \eqref{316-} tends to {0}.  Indeed,  we have $|(A^{\alpha} \theta_n,  u_n)|
\le \|A^{\alpha-\frac{1}{2}}\theta_n\| \|A^{\frac{1}{2}}u_n\|=o(1)$ thanks to \eqref{+211}, \eqref{th2} and $\alpha<\frac{1}{2}$.
Thus, by (\ref{316-}), along with (\ref{vv1}), we get $\|A^{\frac{1}{2}}u_n\| =o(1)$. Therefore, combining this with \eqref{th2}, \eqref{vv1}, one can arrive at the contradiction  $\|U_n\|_{\mathcal{H}}=o(1)$.
\vskip 4mm

{\bf{(iii)}} Assume $(\alpha,\beta,\gamma)\in V_3$, and let $k=\displaystyle
\frac{2(-2\alpha+\beta+\gamma)}{1-\gamma}$.

Taking the inner product of \eqref{s3+} with $\lambda_n^{-k}\theta_n$ yields
\begin{equation}\label{v3_0}
i\|\theta_n\|^2 -\lambda_n^{-1}(A^{\frac{\beta}{2}}q_n,\theta_{n})+\lambda_n^{-1}(A^{\alpha}v_n, \theta_{n})=\lambda_n^{-1-k}o(1).
\end{equation}

\noindent
From Lemma \ref{lemma-1}, it is easy to get
\begin{equation}
\label{vv61}
|(q_n, \lambda_n^{-1}A^{\frac{\beta}{2}}\theta_{n})|\leq\|q_n\|
\|\lambda_n^{-1}A^{\frac{\beta}{2}}\theta_{n}\|={\lambda_n^{-k}}o(1), \end{equation}
and thanks to   $\alpha<\frac{\beta+\gamma}{2}$, we have
\begin{equation}
\label{vv6}
|(A^{\alpha}v_n, \lambda_n^{-1}\theta_{n})|\leq \|A^{\alpha-\frac{\beta}{2}}v_n\|\|\lambda_n^{-1}A^{\frac{\beta}{2}}\theta_{n}\|={\lambda_n^{-\frac{k}{2}}}o(1).
\end{equation}
Thus,   from \eqref{v3_0}-\eqref{vv6}, we obtain
\begin{equation}\label{v3_1}
\|\theta_{n}\|={\lambda_n}^{-\frac{k}{4}}o(1).
\end{equation}
Note that we still have (\ref{316-}) now. To show the contradiction, we divide $V_3$ into four regions (see Figure \ref{8134}), and we shall prove $\|v_n\|,$ $\|A^{\frac{\gamma}{2}}v_n\|,$ $\|A^{\frac{1}{2}}u_n\|=o(1)$ by using (\ref{316-}) in each region, respectively(Case iii-A, B, C, D).

\begin{figure}
	\centering
	\includegraphics[width=6cm]{./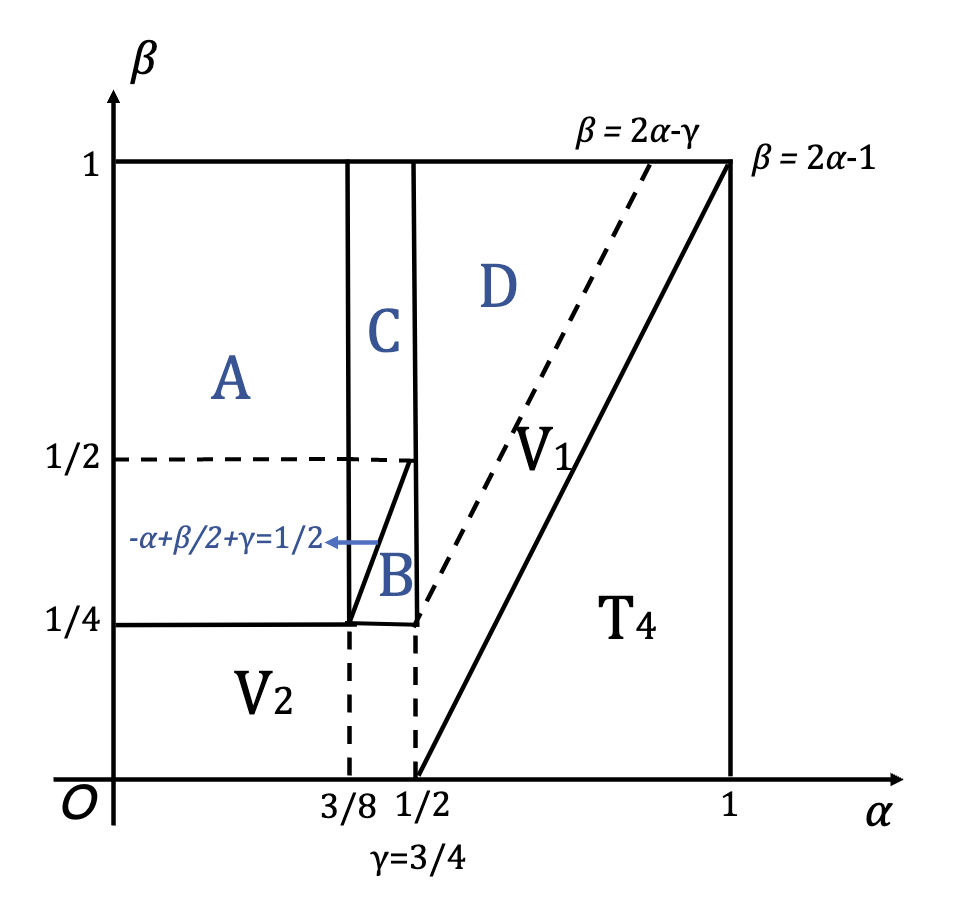}
	\caption{\small Subregions of $V_3$.}
	\label{8134}
\end{figure}

\indent
{\bf Case iii-A.} Let $(\alpha, \beta, \gamma) \in V_3$ and   $0\leq \alpha \leq \frac{\gamma}{2}$.

The assumption $\beta+\gamma\ge 1$ and \eqref{s2+}, \eqref{+211}, \eqref{v3_1} imply that
\begin{equation*}
\label{vv7}
\| A^{-\frac{\beta+\gamma}{2}}\left(I+mA^{\gamma}\right)v_n\|
\le \lambda_n^{-1}(\sigma \|A^{1-\frac{\beta+\gamma}{2}} u_n\|
+\|A^{\alpha-\frac{\beta+\gamma}{2}}\theta_n\|+ \lambda_n^{-k}o(1))
= \lambda_n^{-1}O(1).
\end{equation*}
Consequently,   we have
\begin{equation}
\label{vv8}
\| A^{{1\over2} - \beta- \gamma}\left(I+mA^{\gamma}\right)v_n\|=\| A^{{1\over2}-\frac{\beta+\gamma}{2}}A^{-\frac{\beta+\gamma}{2}}\left(I+mA^{\gamma}\right)v_n\|
= \lambda_n^{-1}O(1).
\end{equation}
\noindent
Taking the inner product of (\ref{s3+}) with $\lambda_n^{-k}A^{ \frac{1}{2}-\frac{\beta}{2}-\gamma}(I+mA^{\gamma})v_n$ {gives}
\begin{equation} \label{vv9}
\begin{array}{l}
i\lambda_n(A^{\frac{\beta}{2}}\theta_n, A^{\frac{1}{2}-\beta-\gamma}(I+mA^{\gamma})v_n) -( q_n,A^{{1\over2} - \gamma}(I+mA^{\gamma})v_n)
\\ \noalign{\medskip}  \displaystyle +\|A^{{1\over2}( \alpha-\frac{\beta}{2}-\gamma+\frac{1}{2})} v_n\|^2+m\|A^{{1\over2} ( \alpha-\frac{\beta}{2}+\frac{1}{2}) } v_n\|^2=\lambda_n^{-k}o(1).
\end{array}
\end{equation}

\noindent
We deduce from \eqref{v1_0}-\eqref{+211} and \eqref{vv8} that \begin{equation}
\label{v9}
i\lambda_n(A^{\frac{\beta}{2}}\theta_n, A^{\frac{1}{2}-\beta-\gamma}(I+mA^{\gamma})v_n),\;\;
( q_n,A^{{1\over2} - \gamma}(I+mA^{\gamma})v_n) = \lambda_n^{1-\frac{k}{2}}o(1).
\end{equation}
Combining \eqref{vv9}-\eqref{v9} yields
\begin{equation}\label{v3_3}
\|A^{{1\over2}( \alpha-\frac{\beta}{2}-\gamma+\frac{1}{2})} v_n\|,  \quad
\|A^{{1\over2} ( \alpha-\frac{\beta}{2}+\frac{1}{2}) } v_n\| =\lambda_n^{\frac{1}{2}-\frac{k}{4}}o(1).
\end{equation}

\noindent
Note that ${1\over2}(\alpha-\frac{\beta}{2}+\frac{1}{2})\le \frac{\gamma}{2}<\frac{1}{2}$ due to
$\alpha \le {\gamma\over2}$ and $\beta+\gamma\ge1. $
Then using Lemma \ref{lemma-inter},  along with \eqref{+211} and \eqref{v3_3}, one has
\begin{equation*}
\|A^{\frac{\gamma}{2}}v_n\|\leq\|A^{{1\over2}(\alpha-\frac{\beta}{2}+\frac{1}{2})} v_n\|^{1-\frac{\beta+2\gamma-2\alpha-1}{1-2\alpha+\beta}}
\|A^{\frac{1}{2}}v_n\|^{\frac{\beta+2\gamma-2\alpha-1}{1-2\alpha+\beta}} = o(1).
\end{equation*}
{Thus, \eqref{vv1} holds in Case iii-A.} It is clear that
$|(A^{\alpha}\theta_n, u_n)| \le \| \theta_n\| \|A^{\alpha} u_n\| = o(1)$ by $\alpha\le {\gamma\over2} \leq{1\over2}$, \eqref{unitnorm} and \eqref{v3_1}. Therefore, by (\ref{316-}) along with (\ref{vv1}), we obtain $\|A^{1\over2} u_n\| = o(1)$, and consequently, $\|U_n\|_{\cal H} = o(1) $ by \eqref{v1_0}, \eqref{316-}, \eqref{vv1} and \eqref{v3_1}.

{\bf Case iii-B.}
Let $(\alpha, \beta, \gamma) \in V_3,\;\frac{\gamma}{2}<\alpha\leq\frac{1}{2}$ and $
-\alpha+\frac{\beta}{2}+\gamma\le \frac{1}{2}$.

Similar to (i), one has that  \eqref{v1_5} still holds since $\alpha >{\gamma \over2}.$
Let us estimate the first two terms of \eqref{v1_5}.
First, note that $0<1-\alpha-\frac{\beta}{2}\leq\frac{1}{2}$ since $\frac{\gamma}{2}<\alpha\leq\frac{1}{2}$
and $\gamma+\beta \ge1.$
From \eqref{+211} {and}  \eqref{+211-1}, we can deduce that
\begin{equation}
\label{vv10}
\|A^{1-\alpha-\frac{\beta}{2}}u_n\| \le
\left\{\begin{array}{ll}
\lambda_n^{-1}O(1), & 0< 1-\alpha-\frac{\beta}{2}\le {\gamma\over2},
\\ \noalign{\medskip}  \displaystyle
\|A^{\frac{\gamma}{2}}u_n\|^{1-\frac{2-2\alpha-\beta-\gamma}{1-\gamma}}\|A^{\frac{1}{2}}u_n\|^{\frac{2-2\alpha-\beta-\gamma}{1-\gamma}}
=\lambda_n^{\frac{1-2\alpha-\beta}{1-\gamma}}O(1),
& {\gamma\over2} < 1\!-\!\alpha\!-\frac{\beta}{2}\le {1\over2}.
\end{array}\right.
\end{equation}
Consequently, by \eqref{v1_8} and \eqref{vv10},
{\small\begin{equation}
\label{vv11}
|(A^{{\beta\over2}}\theta_n, A^{1-\alpha-\frac{\beta}{2}}u_n)| \le \|A^{{\beta\over2}}\theta_n\| \|A^{1-\alpha-\frac{\beta}{2}}u_n\|=
\left\{\begin{array}{ll}
\lambda_n^{-{k\over2}}o(1), & 0< 1-\alpha-\frac{\beta}{2}\le {\gamma\over2},
\\ \noalign{\medskip}  \displaystyle
\lambda_n^{\frac{2(1-\beta-\gamma)}{1-\gamma}}o(1),
& {\gamma\over2} < 1-\alpha-\frac{\beta}{2}\le {1\over2}.
\end{array}\right.
\end{equation}}
Note that  $\beta+\gamma\ge 1$, {one has}
\begin{equation}
\label{vv12}
|(A^{1-\alpha}\theta_n, u_n)| = o(1).
\end{equation}

\noindent By $\frac{\gamma}{2}<-\alpha+\frac{\beta}{2}+\gamma\le \frac{1}{2}$, we have the following estimation by Lemma \ref{lemma-inter}, \eqref{+211} {and} \eqref{+211-1},
\begin{equation*}
\label{vv13}
\|A^{-\alpha+\frac{\beta}{2}+\gamma}v_n\|
\leq\|A^{\frac{\gamma}{2}}v_n\|^{1-\frac{-2\alpha+\beta+\gamma}{1-\gamma}}
\|A^{\frac{1}{2}}v_n\|^{\frac{-2\alpha+\beta+\gamma}{1-\gamma}}
=\lambda_n^{\frac{-2\alpha+\beta+\gamma}{1-\gamma}}O(1).
\end{equation*}
This combined with \eqref{v1_0} yields
\begin{equation}
\label{vv14}
|(q_n, A^{-\alpha+\frac{\beta}{2}}(I+m A^{\gamma}) v_n)| \le C \|q_n\| \| A^{-\alpha+\frac{\beta}{2}+\gamma } v_n\|= o(1).
\end{equation}

\noindent Substituting \eqref{v3_1}, \eqref{vv12}, \eqref{vv14} into \eqref{v1_5}, we get $\|v_n\|, \|A^{\gamma\over 2} v_n\| = o(1),$ i.e., \eqref{vv1} holds.
Then,  similar to the discussion in { Case iii-A}, we can get $|(A^{\alpha}\theta_n, u_n)| \le \| \theta_n\| \|A^{\alpha} u_n\| = o(1)$ due to $\alpha\le {1\over2}$ and \eqref{v3_1}, which together with (\ref{316-}) and (\ref{vv1})  implies $\|A^{1\over2} u_n\| = o(1)$.  Finally,
combining this with \eqref{v3_1}, we get the contradiction $\|U_n\|_{\cal H} = o(1).$

\indent
{\bf Case iii-C.} Let $(\alpha, \beta, \gamma) \in V_3,\;\frac{\gamma}{2}<\alpha\leq\frac{1}{2},$ and  $
-\alpha+\frac{\beta}{2}+\gamma> \frac{1}{2} $.

It follows from  \eqref{s3+} that
\begin{equation}\label{200}
\|A^{\frac{\beta}{2}}q_n\| \le \|\lambda_n\theta_n\|+ \|A^\alpha v_n\| + \lambda_n^{-k}o(1).
\end{equation}
{By \eqref{+211}, \eqref{+211-1} and interpolation \eqref{4.41},} one gets
\begin{equation}
\label{vv15}
\|A^\alpha v_n\| \le \|A^{\gamma \over2} v_n\|^{1-\frac{2\alpha-\gamma}{1-\gamma}} \|A^{1\over2} v_n\|^{\frac{2\alpha-\gamma}{1-\gamma}} = \lambda_n^{\frac{2\alpha-\gamma}{1-\gamma}}O(1).
\end{equation}

\noindent
We shall show that
\begin{equation}\label{3.31}
\|\lambda_n\theta_n\|=\left\{
\begin{array}{ll} \displaystyle
\lambda_n^{1-\frac{k}{4}}o(1),\quad &\mbox{if}~~~
1-\frac{k}{4}\leq{\frac{2\alpha-\gamma}{1-\gamma}},\\   \noalign{\medskip}  \displaystyle
\lambda_n^{\frac{2\alpha-\gamma}{1-\gamma}}o(1), \quad &\mbox{if}~~~  1-\frac{k}{4}>{\frac{2\alpha-\gamma}{1-\gamma}}.
\end{array}\right.
\end{equation}

\noindent
Therefore, combining \eqref{200}-\eqref{3.31}, we see
\begin{equation}\label{3.32}
\|A^{\frac{\beta}{2}}q_n\|=\lambda_n^{\frac{2\alpha-\gamma}{1-\gamma}}O(1).
\end{equation}
Thanks to
$0<-\alpha+\frac{\beta}{2}+\frac{\gamma}{2}<\frac{\beta}{2}$, Lemma \ref{lemma-inter}, \eqref{v1_0}, (\ref{+211-1}) and \eqref{3.32}, we obtain
\begin{align*}
    \begin{array}{lll}
|(q_n, A^{-\alpha+\frac{\beta}{2}}(I+m A^{\gamma}) v_n)|&\le& C\|A^{-\alpha+\frac{\beta}{2}+\frac{\gamma}{2}}q_n\|\|A^\frac{\gamma}{2}v_n\|\\ \noalign{\medskip}
&\le&
C\|A^{-\alpha+\frac{\beta}{2}+\frac{\gamma}{2}}q_n\|  \\ \noalign{\medskip} \displaystyle
& \le& C\|q_n\|^{\frac{2\alpha-\gamma}{\beta}}\|A^{\frac{\beta}{2}}q_n\|^{1-\frac{2\alpha-\gamma}{\beta}}\\
&=& o(1).
\end{array}
\end{align*}
Then, using the same argument after \eqref{vv14}, one can also arrive at the contradiction $\|U_n\|_{\cal H} = o(1).$

{Now we are the place to prove \eqref{3.31}.}
By \eqref{v3_1}, it is obvious that
\begin{equation}
\label{vv17}
\|\lambda_n \theta_{n}\|=\lambda_n^{1-\frac{k}{4}}o(1) =  \lambda_n^{\frac{2+2\alpha-\beta-3\gamma}{ 2(1-\gamma)} } o(1).
\end{equation}
On the other hand,  if $1-\frac{k}{4}>{\frac{2\alpha-\gamma}{1-\gamma}}$, i.e., $\alpha+\frac{\beta+\gamma}{2}<1$, one can get a better decay rate than \eqref{vv17}.
In fact, recall that  $\frac{1}{2}-\beta-\frac{\gamma}{2}\leq 0$ due to $\beta+\gamma \ge 1 $.
Then, it follows from   (\ref{s2+}) that
\begin{equation}
\label{vv18}
i\lambda_nA^{\frac{1}{2}-\beta}(mI+A^{-\gamma})v_n+\sigma A^{\frac{3}{2}-\beta-\gamma}u_n-A^{\frac{1}{2}+\alpha-\beta-\gamma}\theta_{n}=o(1).
\end{equation}

\noindent
By \eqref{+211}, \eqref{v3_1}, \eqref{vv18},  $\frac{3}{2}-\beta-\gamma\leq \frac{1}{2} $ and $\frac{1}{2}+\alpha-\beta-\gamma\leq 0$, one has
\begin{equation}\label{3.3}
\|A^{\frac{1}{2}-\beta}v_n\|=\lambda_n^{-1}O(1).
\end{equation}
{Note that  $\frac{1}{2}-\beta\le\alpha-\frac{\beta}{2}
	<-\beta-\frac{\gamma}{2}+1$ where we use $\alpha+\frac{\beta+\gamma}{2}<1$ and $\beta+\gamma\ge 1$.
	Combining \eqref{+211-1},} \eqref{3.3} and interpolation, we have
\begin{equation}
\label{vv19}
\|A^{\alpha-\frac{\beta}{2}}v_n\|\leq
\|A^{\frac{1}{2}-\beta}v_n
\|^{\frac{2-2\alpha-\beta-\gamma}{1-\gamma}}
\|A^{-\beta-\frac{\gamma}{2}+1}v_n
\|^{1-\frac{2-2\alpha-\beta-\gamma}{1-\gamma}}
=\lambda_n^{-\frac{2-2\alpha-\beta-\gamma}{1-\gamma}}O(1),
\end{equation}
where {$\|A^{-\beta-\frac{\gamma}{2}+1}v_n\|=O(1)$ } since $-\beta-\frac{\gamma}{2}+1\le\frac{\gamma}{2}$.
\noindent
Using \eqref{v1_8} and \eqref{vv19} yields
\begin{equation}
\label{vv20}
|\lambda_n^{-1}(A^{\alpha}v_n, \theta_{n})|
\leq\|A^{\alpha-\frac{\beta}{2}}v_n\|
\|\lambda_n^{-1}A^{\frac{\beta}{2}}\theta_{n}\|
=\lambda_n^{\frac{4\alpha-2}{1-\gamma}}o(1).
\end{equation}
Note that $-k\leq \frac{4\alpha-2}{1-\gamma}$ due to $\beta+\gamma\ge 1$. Thus,
substituting \eqref{vv61} and \eqref{vv20} into   (\ref{v3_0}) yields $\|\theta_{n}\|=\lambda_n^{\frac{2\alpha-1}{1-\gamma}}o(1)$. Consequently, the second part of \eqref{3.31} is obtained.

\indent
{\bf Case iii-D.}  Let   $\frac{1}{2}<\alpha\leq\frac{\beta+\gamma}{2}$.

Note  that by (\ref{s2+}), (\ref{+211}) and (\ref{v3_1}), along with  $\alpha>\frac{1}{2}$, we see that
\begin{equation}\label{111}
\|\lambda_nA^{-\alpha}(I+mA^{\gamma})v_n\|=O(1).
\end{equation}
Thus, taking the inner product of \eqref{s3+} with
$\lambda_n^{-k}A^{-\alpha}(I+mA^{\gamma})v_n$ in $H$, we have
\begin{equation}\label{v3_5}
(i\lambda_n\theta_n, A^{-\alpha}(I+mA^{\gamma})v_n)-(A^{\frac{\beta+\gamma}{2}-\alpha}q_n, A^{-\frac{\gamma}{2}}(I+mA^{\gamma})v_n)+\|v_n\|^2+m\|A^{\frac{\gamma}{2}}v_n\|^2 = o(1).
\end{equation}

\noindent
Recalling that $\alpha>\frac{1}{2}$ and \eqref{s3+}, one has
$$\|i\lambda_nA^{\frac{1}{2}-\alpha}\theta_n-A^{\frac{1}{2}-\alpha+\frac{\beta}{2}}q_n+A^{\frac{1}{2}} v_n\|=\lambda_n^{-k}o(1).$$
Thus, we deduce from \eqref {+211} and \eqref{v3_1} that
\begin{equation}
\label{vv21}
\|A^{\frac{\beta+1-2\alpha}{2}}q_n\|=\lambda_nO(1).
\end{equation}

\noindent
By interpolation \eqref{4.41}, along with \eqref{v1_0} and \eqref{vv21}, we get
\begin{equation}\label{vv22} \|A^{\frac{\beta+\gamma}{2}-\alpha}q_n\|\leq\|q_n\|^{1-{\frac{\beta+\gamma-2\alpha}{\beta+1-2\alpha}}}\|A^{\frac{\beta+1-2\alpha}{2}}q_n\|^{\frac{\beta+\gamma-2\alpha}{\beta+1-2\alpha}}=o(1).
\end{equation}

\noindent
Note that $\|A^{-\frac{\gamma}{2}}(I+mA^{\gamma})v_n\|\leq \| A^{-\frac{\gamma}{2}}v_n\|+
\|mA^{\frac{\gamma}{2}}v_n\|=O(1)$. Substituting \eqref{v3_1}, (\ref{111}) and \eqref{vv22} into \eqref{v3_5} yields
\begin{equation}\label{222}
\|v_n\|,\;\|A^{\frac{\gamma}{2}}v_n\| = o(1).
\end{equation}

\noindent Finally, the assumption $0<\alpha-\frac{\beta}{2}<\frac{\gamma}{2}$, \eqref{s1+} and \eqref{v1_8} imply that
\begin{equation}
\label{v23}
|(A^{\alpha}\theta_n, u_n)|\leq \| \lambda_n^{-1}A^{\frac{\beta}{2}}\theta_n\| \| \lambda_nA^{\alpha-\frac{\beta}{2}}u_n\|\leq\|\lambda_n^{-1}A^{\frac{\beta}{2}}\theta_n\|\|A^{\alpha-\frac{\beta}{2}}v_n\|=\lambda_n^{-\frac{k}{2}}o(1).
\end{equation}
Then, by \eqref{316-}, along with (\ref{222}) and (\ref{v23}), we obtain
\begin{equation}\label{333}
\|A^{\frac{1}{2}}u_n\|=o(1).
\end{equation}
In summary,  by (\ref{v1_0}), (\ref{v3_1}), (\ref{222}) and (\ref{333}), we have arrived at the contradiction $\|U_n\|_{\mathcal{H}}=o(1)$.

\section{Polynomial Stability of the System without an Inertial Term (Proof of \Cref{m00})}\label{8.7}
\setcounter{equation}{0}
This section is devoted to considering the stability of system \eqref{101} without an inertial term, i.e., $m=0$. Note that by Lemma \ref{l202}, we know that the corresponding semigroup $e^{t{\cal A}}$ is strongly stable. We shall further estimate the polynomial decay rates of the solutions to the system when parameters $(\alpha,\;\beta)\in V_i^*,\; i=1,2,$ respectively.

Similar to the argument in Section \ref{8.8},  we still employ the proof by contradiction to show \Cref{m00}. Specifically,  suppose (\ref{huang2}) fails. Then, there at least exists  a sequence $\{ \lambda_{n},  U_{n}  \}_{n=1}^\infty
\subset {\mathbb R} \times \mathcal{D}({\cal A})  $ such
that \eqref{unitnorm}-\eqref{331} hold with $m=0$, that is
\begin{eqnarray}
&&\lambda_n^k(i\lambda_nA^{\frac{1}{2}}u_n- A^{\frac{1}{2}}v_n)=o(1)\quad  \;  \mbox{ in  }  \;  H,\label{s1+1}\\   \noalign{\medskip}  \displaystyle
&&\lambda_n^k \left(i\lambda_nv_n +\sigma A u_n-A^{\alpha}\theta_n\right)=o(1)\quad  \;  \mbox{ in  }  \;  H,\label{s2+1}\\\noalign{\medskip}  \displaystyle
&&\lambda_n^k(i\lambda_n\theta_n-A^{\frac{\beta}{2}}q_n+A^\alpha v_n)=o(1)\quad  \;  \mbox{ in  }  \;  H,\label{s3+1}\\\noalign{\medskip}  \displaystyle
&&\lambda_n^k(i\lambda_n\tau  q_n+ q_n+A^{\frac{\beta}{2}}\theta_n)=o(1)\quad  \;  \mbox{ in  }  \;  H.\label{s4+1}
\end{eqnarray}
Consequently, \eqref{v1_0}-\eqref{+211} remain true.
Now, we proceed to show (i)-(ii) in \Cref{m00}, respectively.

\textbf{(i)} Let $(\alpha,\;\beta)\in V_1^*$(see (\ref{par2})) and $k= \frac{2\alpha-\beta}{\alpha}$.

\indent
Thanks to $-\alpha+\frac{\beta}{2}\le 0$ and \eqref{unitnorm}, we can deduce from  \eqref{s3+1}   that
\begin{equation}
\label{mv2}
\|\lambda_nA^{-\alpha}\theta_n\|=O(1).
\end{equation}
Then, using (\ref{v1_8}), (\ref{mv2}) and the interpolation inequality (\ref{4.41}) in Lemma \ref{lemma-inter}, we obtain
\begin{equation}\label{m05}
\left\| \theta_n\right\| \leq \left\| A^{-\alpha} \theta_n\right\| ^{\frac{\beta}{2\alpha+\beta}}\|A^{\frac{\beta}{2}} \theta_n\|^{\frac{2\alpha}{2\alpha+\beta}}=o(1).
\end{equation}

\noindent  We take  the inner product of (\ref{s3+1}) with $\lambda_n^{-k}A^{-\alpha}v_{n}$ to get
\begin{equation}\label{m06}
(i\lambda_n\theta_n, A^{-\alpha}v_n)-(A^{\frac{\beta}{2}-\alpha}q_n, v_n)+\|v_n\|^2 =o(1).
\end{equation}

\noindent
By (\ref{s2+1}), it is clear that $\|i \lambda_nA^{-\alpha} v_n +\sigma A^{1-\alpha}u_n - \theta_n \|=\lambda_n^{-k}o(1)$.  Combining this with \eqref{unitnorm}, along with $\alpha\geq \frac{1}{2}$, yields
\begin{equation}
\label{mv1}
\|A^{-\alpha}v_n\|=\lambda_n^{-1}O(1).
\end{equation}

\noindent
Substituting \eqref{unitnorm}, \eqref{v1_0}, \eqref{m05}, \eqref{mv1} into \eqref{m06},  along with $\frac{\beta}{2}-\alpha\leq 0$,  we  obtain
\begin{equation}\label{m07}
\|v_n\|= o(1).
\end{equation}

\noindent
Furthermore, similar to (\ref{v1_6})-\eqref{316-}, we can deduce from \eqref{s1+1} and \eqref{s2+1} that
\begin{equation}\label{m09}
-\|v_n\|^2+\sigma\|A^{\frac{1}{2}} u_n\|^2
-i(\lambda_n^{-1}A^{\frac{\beta}{2}}\theta_n, A^{\alpha-\frac{\beta}{2}}v_n) =o(1).
\end{equation}

\noindent
It follows from \eqref{v1_0}, \eqref{v1_8}, \eqref{s3+1}  and \eqref{m05} that
\begin{equation*} \label{mv3}
(\lambda_n^{-1}A^{\frac{\beta}{2}}\theta_n, \, A^{\alpha-\frac{\beta}{2}}v_n) =	(\lambda_n^{-1}A^{\frac{\beta}{2}}\theta_n, \, q_n-i\lambda_nA^{-\frac{\beta}{2}}\theta_n) + \lambda_n^{-1-k}o(1)
= o(1).
\end{equation*}
Combining this with \eqref{m07} and \eqref{m09} yields
\begin{equation}\label{m010}
\|A^{\frac{1}{2}}u_n\|=o(1).
\end{equation}
Consequently, we arrive at the contradiction $\|U_n\|_{\mathcal{H}}=o(1)$ by \eqref{v1_0}, \eqref{m05}, \eqref{m07} and   \eqref{m010}.\\

\textbf{ (ii)}
Let
$(\alpha,\;\beta)\in V_2^*$ and $k= 2(2-2\alpha-\beta)$.

Taking the inner product of  \eqref{s3+1} with $\lambda_n^{-k-1}\theta_n$, one has
\begin{equation}\label{m011}
(i\lambda_n\theta_n, \lambda_n^{-1}\theta_{n})-(A^{\frac{\beta}{2}}q_n, \lambda_n^{-1}\theta_{n})+(A^{\alpha}v_n, \lambda_n^{-1}\theta_{n})=\lambda_n^{-1-k}o(1).
\end{equation}

\noindent
By \eqref{v1_0} and \eqref{v1_8},  we get
\begin{equation}
\label{mv4}
|(A^{\frac{\beta}{2}}q_n, \lambda_n^{-1}\theta_n)| \leq\|q_n\|\|\lambda_n^{-1}A^{\frac{\beta}{2}}\theta_n\|=\lambda^{-k}o(1).
\end{equation}
Moreover, by \eqref{s2+1} and \eqref{+211}, we have
\begin{align}\label{4.40}
\|\lambda_n A^{-\frac{1}{2}}v_n\|=O(1).
\end{align}
Recalling  $0\leq\alpha<{1\over2}$, we deduce from  \eqref{4.41}, \eqref{+211} and \eqref{4.40}  that
\begin{equation*}
\label{mv5}
\|A^{\alpha-\frac{\beta}{2}}v_n\|\leq
\|A^{-\frac{1}{2}}v_n\|^{{1\over2}-\alpha+{\beta\over 2}}
\|A^{\frac{1}{2}}v_n\|^{{1\over2}+\alpha-{\beta\over 2}}
=\lambda_n^{2\alpha-\beta}O(1).
\end{equation*}

\noindent
This implies that
\begin{equation}
\label{mv7}
|(A^{\alpha}v_n, \lambda_n^{-1}\theta_{n})|\leq \|A^{\alpha-\frac{\beta}{2}}v_n\|
\|\lambda_n^{-1}A^{\frac{\beta}{2}}\theta_{n}\|
=\lambda^{4\alpha-2}o(1).
\end{equation}

\noindent
Thus, substituting  \eqref{mv4} and \eqref{mv7} into \eqref{m011}, along with $-k\leq 4\alpha-2$, yields
\begin{equation}\label{m015}
\|\theta_{n}\|=\lambda^{2\alpha-1}o(1).
\end{equation}

\noindent We claim that \begin{equation}\label{m013}
\|v_n\| = o(1).
\end{equation}
Indeed,  note that (\ref{m06}) still holds for this case. Then, in order to show (\ref{m013}), it suffices to prove that the first and second terms in (\ref{m06}) are both $o(1)$.

From \eqref{unitnorm} and \eqref{4.40},   it is easy to get
\begin{equation}
\label{mvv1} \|A^{-\alpha}v_n\|\leq\|A^{-\frac{1}{2}}v_n\|^{2\alpha}\|v_n\|^{1-2\alpha}=\lambda_n^{-2\alpha}O(1).
\end{equation}
Combining this with \eqref{m015} gives that the first term in (\ref{m06}) satisfies
\begin{equation}\label{mvv11}
|(i\lambda_n\theta_n, A^{-\alpha}v_n)|\leq\|\theta_{n}\|\|\lambda_nA^{-\alpha}v_n\|=o(1).
\end{equation}

\noindent We shall
show the second term in \eqref{m06} is also $o(1)$. In fact, if $0\leq\alpha\leq\frac{\beta}{2}$,  by interpolation inequality (\ref{4.41}), along with \eqref{unitnorm} and \eqref{s1+1}, we have
\begin{equation}\label{m012} \|A^{\frac{\beta}{2}-\alpha}v_n\|\leq\|A^{\frac{1}{2}}v_n\|^{\beta-2\alpha}\|v_n\|^{1-\beta+2\alpha}=\lambda_n^{\beta-2\alpha}O(1).
\end{equation}
Thus, by (\ref{v1_0}) and (\ref{m012}), we get
\begin{equation}
\label{mvv2}
|(q_n, A^{\frac{\beta}{2}-\alpha}v_n )|
\leq\|q_n\|\|A^{\frac{\beta}{2}-\alpha}v_n\|=\lambda_n^{2\beta-2}o(1) =o(1).
\end{equation}
If  $\frac{\beta}{2}<\alpha  < \frac{1}{2}$, 	
it is clear from \eqref{unitnorm} and \eqref{v1_0} that
\begin{equation}\label{444}
|(q_n, A^{\frac{\beta}{2}-\alpha}v_n )| \le
\|q_n\| \|A^{\frac{\beta}{2}-\alpha}v_n\|= \lambda_n^{-{k\over2}}o(1).
\end{equation}
By  (\ref{mvv2}) and (\ref{444}), we have proved that the second term in (\ref{m06}) is $o(1)$, and hence, (\ref{m013}) holds.

\indent
Moreover, it is clear that  $|(\theta_n, A^{\alpha}u_n)| \le \|\theta_n\|\|A^{\alpha}u_n\|= \lambda_n^{2\alpha-1}o(1) $ by \eqref{unitnorm}, \eqref{m015} and $\alpha < {1\over2}$. This along with  \eqref{m09} and \eqref{m013}, yields
\begin{equation}\label{555}
\|A^{\frac{1}{2}} u_n\|=o(1).
\end{equation}
In summary, by (\ref{v1_0}), (\ref{m015}), (\ref{m013}) and (\ref{555}), we have arrived at the contradiction $\|U_n\|_{\mathcal{H}}=o(1)$. The desired result follows.

\section{Proof of  Optimality of Decay Rates (\Cref{th-o})}\label{8.6}
\setcounter{equation}{0}
\setcounter{theorem}{0}

In this section, we shall prove \Cref{th-o}, which shows the orders of polynomial decay achieved in \Cref{t-3-1} and \Cref{m00} are indeed optimal. To this end, we analyze the eigenvalues $\lambda$ of the operator $\mathcal{A}$, both when $m \ne 0$ (in the presence of an inertial term) and when $m = 0$ (in the absence of an inertial term). In addition, we assume that the system has different wave speeds, that is, $\sigma \tau \ne 1$. In what follows, we first give the characteristic equation associated with $\mathcal{A}$ (\Cref{sec:characteristic-equation}). We then describe the solutions to the characteristic equation in an asymptotic setting in \Cref{sec:with-inertial} when the system is with an inertial term (\Cref{tab:sigma-2}) and without an inertial term (\Cref{tab:m-0-sigma-2}), respectively. Finally, we show that these eigenvalues indicate the optimality of the polynomial decay rates described in \Cref{t-3-1} and \Cref{m00} (\Cref{sec:optimality-remark}).

\subsection{Characteristic Equations}
\label{sec:characteristic-equation}

We shall obtain  the characteristic equations associated with the operator $\mathcal{A}$ when the system is with an inertial term ($m \ne 0$) and without an inertial term ($m=0$). For this purpose, recall that $A$ is a  self-adjoint, positive-definite operator with compact resolvent. Thus, there exists a sequence of eigenvalues $\{\m_n\}_{n\ge1}$  of $A$ such that
\begin{equation*}
0<\m_1\le\m_2\le\cdots\le\m_n\le\cdots,~~\qquad\lim_{n\to\infty}\m_n=\infty.
\end{equation*}
A direct computation gives that the eigenvalues $\lambda$ of operator ${\cal A}$ {satisfy} the following quartic equation:
\begin{equation}
\label{f=0}
\begin{array}{lcl}
f(\l,\m_n) &:=& (m\tau \cdot \mu_n^\gamma + \tau ) \lambda^4 + (m \cdot  \mu_n^\gamma +1) \lambda^3 + (\tau \cdot \mu_n^{2\alpha} + m \cdot \mu_n^{\beta+\gamma} + \sigma \tau \cdot \mu_n +  \mu_n^\beta) \lambda^2\crr
 &&+ (\mu_n^{2\alpha}+\sigma \cdot \mu_n) \lambda + \sigma \cdot \mu_n^{1+\beta}\crr
&=&0.
\end{array}
\end{equation}
{Since} the system has different wave speeds, we take $\sigma=2$ and $\tau=1$ so that $\sigma \tau = 2 \ne 1$ without loss of generality. In addition, by taking $m=1$ and $m=0$, we can derive the characteristic equation of the system from (\ref{f=0}) when it is with and without an inertial term respectively as follows:
\begin{eqnarray}
\label{eq:characteristic-with-inertial}
&(\mu_n^\gamma + 1 ) \lambda_n^4 + ( \mu_n^\gamma +1) \lambda_n^3 + (  \mu_n^{2\alpha} + \mu_n^{\beta+\gamma} + 2 \mu_n +  \mu_n^\beta) \lambda_n^2 + (\mu_n^{2\alpha}+2 \mu_n) \lambda_n + 2 \mu_n^{1+\beta}=0.
\\ \noalign{\medskip}  \displaystyle
\label{eq:characteristic-without-inertial}
&\lambda_n^4 + \lambda_n^3 + ( \mu_n^{2\alpha} +  2 \mu_n +  \mu_n^\beta) \lambda_n^2 + (\mu_n^{2\alpha}+ 2 \mu_n) \lambda_n + 2 \mu_n^{1+\beta}=0.
\end{eqnarray}
In (\ref{eq:characteristic-with-inertial}) and (\ref{eq:characteristic-without-inertial}), $\lambda_n$ are sequences of eigenvalues of $\mathcal{A}$. 
We seek to solve \eqref{eq:characteristic-with-inertial} and \eqref{eq:characteristic-without-inertial} for $\lambda_n$ in an asymptotic setting, that is, in the setting as $\mu_n \rightarrow \infty$. Several existing works \cite{kuang, han,hao2023stability,haoliu2} have described the applications of a procedure that can also be used to solve (\ref{eq:characteristic-with-inertial}) and (\ref{eq:characteristic-without-inertial}). In this paper, we apply this procedure to solve (\ref{eq:characteristic-with-inertial}) and (\ref{eq:characteristic-without-inertial}), and omit the discussion of its details. We refer readers to the aforementioned works for details of this procedure. For each region in Defintion~\ref{def:E-partition} and Defintion~\ref{def:E-tilde-partition}, the procedure applies the quartic formula \cite{irving2004integers} to identify the roots of \eqref{eq:characteristic-with-inertial} and \eqref{eq:characteristic-without-inertial}. The solutions to (\ref{eq:characteristic-with-inertial}) and (\ref{eq:characteristic-without-inertial}) are summarized in {Tables} \ref{tab:sigma-2} and \ref{tab:m-0-sigma-2}, respectively.

\subsection{Eigenvalues of the System}
\label{sec:with-inertial}

First,  we consider the solutions to (\ref{eq:characteristic-with-inertial}), which are eigenvalues of $\mathcal{A}$ with an inertial term, in each {region given in Definition \ref{def:E-partition}.} It should be noticed that the partition in {Definition} \ref{def:E-partition} is similar to that presented in \cite{han}, where the exponential stability was  established when parameters $(\alpha,\beta,\gamma) \in F_{13},\;L_{123},\; L_{34}$. More precisely, the solutions to \eqref{eq:characteristic-with-inertial}
are {presented} in \Cref{tab:sigma-2}. As a result, one can {conclude that in region $V_1$,} even though the coefficients for the real and imaginary parts of the eigenvalues on $T_1, F_{12}, F_{14},$ and $ L_{124}$ are different, there always exists a sequence of eigenvalues on each region such that $\Re \lambda_{n} = | \Im   \lambda_{n} |^{-{1\over k_1}},$ where $k_1$ is defined in \Cref{t-3-1}. Therefore, the decay properties of the corresponding semigroup are {consistent in each part of $V_1$ by \cite{l-rao}.}

By a similar argument on $V_2,\; V_3$ and $V_1^*,\; V_2^*$ respectively, we can obtain that the long-time behavior of system \eqref{101} remains the same in every part of the interior of $V_i,i=2,3$ or $V_j^*,j=1,2$. Note that to characterize the long-time behavior in $V_1^*$ and $V_2^*$, we have used the solutions to \eqref{eq:characteristic-without-inertial} provided in \Cref{tab:m-0-sigma-2}.

\begin{table}[th]
	\centering
	\bgroup
	\def\arraystretch{1.5}
	\resizebox{\textwidth}{!}{
		\begin{tabular}{crr}
			\hline
			Region &
			$\lambda_{n,1}$ and $\lambda_{n,2}$
			&
			$\lambda_{n,3}$ and $\lambda_{n,4}$\\
			\hline
			$T_{1}$ &
			$-\frac{1}{2}\m_n^{-2\a+\beta+\g}(1+o(1))\pm i\m_n^{\a-\frac{\g}{2}}(1+o(1))$ &
			$-\frac{1}{2}(1+o(1)) \pm i\sqrt{2}\m_n^{-\a+\frac{\beta}{2}+\frac{1}{2}}(1+o(1))$ \\
			$T_{2}$ &
			$-\frac{1}{8}\m_n^{2\a+\beta+\g-2}(1+o(1))\pm i\sqrt{2}\m_n^{\frac{1}{2}-\frac{\g}{2}}(1+o(1))$ & $-\frac{1}{2}(1+o(1)) \pm i\m_n^{\frac{\beta}{2}}(1+o(1))$\\
			$T_{3}$ &
			$-\frac{1}{2}(1+o(1)) \pm i\m_n^{\frac{\beta}{2}}(1+o(1))$ &
			$-\frac{1}{2}\m_n^{2\a-\beta-\g}(1+o(1)) \pm i\sqrt{2}\m_n^{\frac{1}{2}-\frac{\g}{2}}(1+o(1))$\\
			$T_{4}$ &
			$-\frac{1}{2}\m_n^{-2\a+\beta+\g}(1+o(1)) \pm i\m_n^{\a-\frac{\g}{2}}(1+o(1))$ &
			$-2\m_n^{-2\a+\beta+1}(1+o(1))$,
			$-1(1+o(1))$\\
			$F_{12}$ &
			$-\frac{1}{18}\m_n^{\beta+\g-1}(1+o(1)) \pm i\sqrt{3}\m_n^{\frac{1}{2}-\frac{\g}{2}}(1+o(1))$ &
			$-\frac{1}{2}(1+o(1)) \pm i\frac{\sqrt{6}}{3}\m_n^{\frac{\beta}{2}}(1+o(1))$ \\
			$F_{13}$ &
			$-\frac{1}{4}(1+o(1)) \pm i\sqrt{2}\m_n^{\beta/2}(1+o(1))$  &
			$-\frac{1}{4}(1+o(1)) \pm i\m_n^{\frac{1}{2}-\frac{\g}{2}}(1+o(1))$ \\
			$F_{14}$ &
			$-\frac{1}{2}\m_n^{\g-1}(1+o(1)) \pm i\m_n^{\frac{\beta}{2}-\frac{\g}{2}+\frac{1}{2}}(1+o(1))$ &
			$-\frac{1}{2}(1+o(1))\pm \frac{i\sqrt{7}}{2}(1+o(1))$\\
			$F_{2}$ &
			$-\frac{1}{8}\m_n^{2\a+\g-2}(1+o(1)) \pm i\sqrt{2}\m_n^{\frac{1}{2}-\frac{\g}{2}}(1+o(1))$ &
			$-\frac{1}{2}(1+o(1)) \pm \frac{i\sqrt{3}}{2}(1+o(1))$\\
			$F_{23}$ &
			$-\frac{1}{2}\m_n^{2\a-1}(1+o(1)) \pm i\sqrt{2}\m_n^{\frac{1}{2}-\frac{\g}{2}}(1+o(1))$ &
			$-\frac{1}{2}(1+o(1)) \pm i\m_n^{\frac{1}{2}-\frac{\g}{2}}(1+o(1))$
			\\
			$L_{123}$ &
			$(-\frac{1}{4}+\frac{\sqrt{2}}{8})(1+o(1)) \pm i\sqrt{2+\sqrt{2}}\m_n^{\frac{1}{2}-\frac{\g}{2}}(1+o(1))$ &
			$(-\frac{1}{4}-\frac{\sqrt{2}}{8})(1+o(1))\pm i\sqrt{2-\sqrt{2}}\m_n^{\frac{1}{2}-\frac{\g}{2}}(1+o(1))$\\
			$L_{124}$ &
			$-\frac{1}{18}\m_n^{\g-1}(1+o(1)) \pm i\sqrt{3}\m_n^{\frac{1}{2}-\frac{\g}{2}}(1+o(1))$ &
			$-\frac{1}{2}(1+o(1)) \pm \frac{i\sqrt{15}}{6}(1+o(1))$ \\
			$L_{2}$ &
			$-\frac{1}{6}\m_n^{2\a-1}(1+o(1)) \pm i\sqrt{2}(1+o(1))$ &
			$-\frac{1}{2}(1+o(1)) \pm \frac{i\sqrt{3}}{2}(1+o(1))$ \\
			$L_{34}$ &
			$-\frac{1}{4}(1+o(1)) \pm i\sqrt{2}\m_n^{\frac{\beta}{2}}(1+o(1))$ &
			$-\frac{1}{4}(1+o(1)) \pm \frac{i\sqrt{15}}{4}(1+o(1))$ \\
			$P_{234}$ & $-\frac{1}{4}(1-\sqrt{j_3}) (1+o(1))\pm i\frac{\sqrt{-j_1}+\sqrt{-j_2}}{4}(1+o(1))$  & $-\frac{1}{4}(1+\sqrt{j_3})(1+o(1))\pm i\frac{\sqrt{-j_1}-\sqrt{-j_2}}{4}(1+o(1))$\\
			\hline
		\end{tabular}
	}\egroup
\caption{Solutions to \eqref{eq:characteristic-with-inertial}  (In $P_{234}$, $j_1\approx -24.0858$, $j_2\approx -5.52312$, and $j_3 \approx 0.608892$ are the three roots of $j^3+29 j^2+115 j -81=0$.)}
	\label{tab:sigma-2}
\end{table}

\begin{table}[t]
	\centering
	\bgroup
	\def\arraystretch{1.5}
	\resizebox{\textwidth}{!}{
		\begin{tabular}{crr}
			\hline
			Region &
			$\lambda_{n,1}$ and $\lambda_{n,2}$
			&
			$\lambda_{n,3}$ and $\lambda_{n,4}$\\
			\hline
			$T_{1}^*$ &
			$-\frac{1}{2}\m_n^{\beta-2\a}(1+o(1)) \pm i\m_n^{\a}(1+o(1))$ &
			$-\frac{1}{2}(1+o(1)) \pm i\sqrt{2} \m_n^{-\a+\frac{\beta}{2}+\frac{1}{2}}(1+o(1))$ \\
			$T_{2}^*$ &
			$-\frac{1}{8}\m_n^{2\a+\beta-2}(1+o(1)) \pm i\sqrt{2} \m_n^{\frac{1}{2}}(1+o(1))$ &
			$-\frac{1}{2}(1+o(1)) \pm i\m_n^{\beta/2}(1+o(1))$ \\
			$T_{4}^*$ & $-\frac{1}{2}\m_n^{\beta-2\a}(1+o(1))\pm i\m_n^{\a}(1+o(1))$ & $-2\m_n^{-2\a+\beta+1}(1+o(1))$, $-1(1+o(1))$ \\
			$F^*_{12}$ &
			$-\frac{1}{18}\m_n^{\beta-1}(1+o(1)) \pm i\sqrt{3}\m_n^{\frac{1}{2}}(1+o(1))$ &
			$-\frac{1}{2}(1+o(1))\pm\frac{i\sqrt{6}}{3}\m_n^{\frac{\beta}{2}}(1+o(1))$ \\
			$F^*_{14}$ &
			$-\frac{1}{2}\m_n^{-1}(1+o(1))\pm i\m_n^{\frac{\beta}{2}+\frac{1}{2}}(1+o(1))$ &
			$-\frac{1}{2}(1+o(1))\pm i\frac{\sqrt{7}}{2}(1+o(1))$ \\
			$F^*_{2}$ &
			$-\frac{1}{8}\m_n^{2\a-2}(1+o(1))\pm i\m_n^{\frac{1}{2}}(1+o(1))$ &
			$-\frac{1}{2}(1+o(1))\pm i\frac{\sqrt{3}}{2}(1+o(1))$ \\
			$L^*_{124}$ &
			$-\frac{1}{18}\m_n^{-1}(1+o(1))\pm i\sqrt{3}\m_n^{\frac{1}{2}}(1+o(1))$ &
			$-\frac{1}{2}(1+o(1))\pm i\frac{\sqrt{15}}{6}(1+o(1))$\\
			$L^*_{23}$ &
			$-\frac{1}{2} \mu^{2\alpha-1}(1+o(1))\pm i\sqrt{2}\m_n^{\frac{1}{2}}(1+o(1))$ &
			$-\frac{1}{2}(1+o(1)) \pm i\m_n^{\frac{1}{2}}(1+o(1))$\\
			$P^*_{123}$ &
			$\left(-\frac{1}{4} + \frac{1}{4\sqrt{2}}\right) (1+o(1)) \pm i \sqrt{2+\sqrt{2}}\mu^{\frac{1}{2}} (1+o(1))$ &
			$\left(-\frac{1}{4} - \frac{1}{4\sqrt{2}}\right) (1+o(1)) \pm i \sqrt{2-\sqrt{2}}\mu^{\frac{1}{2}} (1+o(1))$\\
			\hline
		\end{tabular}
	} \egroup
\caption{Solutions to \eqref{eq:characteristic-without-inertial}}
	\label{tab:m-0-sigma-2}
\end{table}

\subsection{Optimality of the Polynomial Decay Rates}
\label{sec:optimality-remark}

With the eigenvalues of $\mathcal{A}$ as given in \Cref{tab:sigma-2} and \Cref{tab:m-0-sigma-2},  we can show that the polynomial decay rates given in \Cref{t-3-1} and \Cref{m00} are optimal.

We first show the rates achieved in
\Cref{t-3-1} are optimal. From \Cref{tab:sigma-2}, we can deduce that there exists a sequence of eigenvalues satisfying
\begin{equation}
\label{o1}
\Re \lambda_{n} = | \Im   \lambda_{n} |^{-{1\over k_i}}, \quad
\;  \mbox{ when  }  \; (\alpha, \; \beta,\; \gamma) \in V_i,\; i=1,2,3,
\end{equation}
where $k_i$ and $V_i$ are defined in \Cref{t-3-1}.
Then by the same argument as given in \cite[Corollary 4.7]{haoliu2}, we can obtain
\begin{equation}
\label{o2}
\varlimsup\limits_{\lambda\in {\mathbb R}, |\lambda|\to \infty}
|\lambda |^{-{1\over k_i}}\| (i\lambda I - {\cal A})^{-1}\|_{{\cal L}({\cal H})}\ge C>0,\quad
\;  \mbox{ when  }  \; (\alpha, \; \beta,\; \gamma) \in V_i,\; i=1,2,3.
\end{equation}
Therefore, the obtained decay rates in \Cref{t-3-1} are optimal.

Following {a} similar discussion as given above, we can also derive that the obtained decay rates given in \Cref{m00} are also optimal based on the expression of eigenvalues in \Cref{tab:m-0-sigma-2}.

\section{Examples}\label{8.5}
\setcounter{equation}{0}

This section is devoted to applying our results to some coupled PDE systems and get the optimal decay rates. 

\noindent{\bf Example 1.}
  Assume $\Omega$ is
a bounded open subset of $\mathbb{R}^n$ with smooth  boundary  {$\Gamma$.}
Consider the following thermoelastic plate equation of Cattaneo's type  (thermoelastic Euler-Bernoulli plate if $m=0$ or thermoelastic Rayleigh plate if $m>0$) (\cite{Graff,rayleigh}):
\begin{equation}\label{601}
\left\{\begin{array}{ll}
u_{tt} - m \Delta u_{tt} + \sigma \Delta^2 u -   \theta=0, &  x\in\Omega, t>0,\\
\theta_{t} +   \Delta   q+ u_{t} = 0,&  x\in\Omega, t>0, \\
\tau q_t + q - \Delta   \theta =0 , &   x\in\Omega, t>0,\\
u  =\Delta u = \theta =  q  =0 , &  x\in\Gamma, t>0,\\
u(0) =  u_0, \; u_t(0)=u_1,  \; \theta(0)=\theta_0, \;  q(0)=q_0, &  x\in\Omega.
\end{array}\right.
\end{equation}

\noindent
To apply the abstract results in {Theorems }\ref{t-3-1} and \ref{m00} to this system, we let operator  $A=\Delta^2$ be the bi-Laplace operator on $\Omega  $ with domain
$ \mathcal{D}(A) = \{u\in H^4(\Omega)\,|\,
u |_{ \Gamma }=  \Delta u |_{\Gamma } =0
\},\; H= L^2(\Omega).$
Note that system (\ref{601}) is corresponding to the abstract system \eqref{101} with $(\alpha, \, \beta,\,\gamma) = (0,\;1, \;{1\over2})$  when $m>0.$ Thus,
the semigroup corresponding to \eqref{601} is polynomially stable with order $t^{-{1\over 6}}$ since parameters $(\alpha, \, \beta,\,\gamma) = (0,\;1, \;{1\over2}) \in V_3$   in \Cref{t-3-1}.

 While for the case $m=0$, the decay rate of the semigroup is $t^{-{1\over 2}}$ due to  \Cref{m00} with parameters $(\alpha, \, \beta ) = (0,\;1) \in V_2^*$.

 \vskip 4mm

\noindent{\bf Example 2.}  Let $\Omega,\Gamma$ be defined as in Example 1.  Consider the following model:
 \begin{equation}\label{602}
\left\{\begin{array}{ll}
u_{tt} - m \Delta u_{tt} + \sigma \Delta^2 u +\Delta  \theta=0, &  x\in\Omega, t>0,\\
\theta_{t}- q- \Delta u_{t} = 0,&  x\in\Omega, t>0, \\
\tau q_t + q +  \theta =0 , &   x\in\Omega, t>0,\\
u  =\Delta u = \theta =  q  =0 , &  x\in\Gamma, t>0,\\
u(0) =  u_0, \; u_t(0)=u_1,  \; \theta(0)=\theta_0, \;  q(0)=q_0, &  x\in\Omega.
\end{array}\right.
\end{equation}

\noindent
Let  $H= L^2(\Omega)$,   $A=\Delta^2$  and
$ \mathcal{D}(A) = \{u\in H^4(\Omega)\,|\,
u |_{ \Gamma }=  \Delta u |_{\Gamma } =0\}.$
By \Cref{t-3-1}, we obtain that $(\alpha, \, \beta,\,\gamma) = ({1\over2},\;0, \;{1\over2}) \in V_1$ and
the semigroup associated with system (\ref{602}) decays polynomially  with optimal  order $t^{-{1\over 2}}$.
Moreover, for the case $m=0$, the semigroup corresponding to  system \eqref{602} decays polynomially  with optimal  order $t^{-{1\over 2}}$ since $(\alpha, \, \beta ) = ({1\over2},\;0) \in V_1^*$ by applying \Cref{m00}.

\vskip 4mm

 \noindent{\bf Example 3.}   Let $\Omega,\Gamma$ be defined as in Example 1. Consider the following model:
\begin{equation}\label{604}
	\left\{\begin{array}{ll}
		u_{tt} - m \Delta u_{tt} + \sigma \Delta^2 u -   \Delta^2 \theta=0, &  x\in\Omega, t>0,\\
		\theta_{t} + \Delta q + \Delta^2 u_{t} = 0,&  x\in\Omega, t>0, \\
\tau q_t + q - \Delta \theta =0 , &   x\in\Omega, t>0,\\
	 u  =\partial_\nu u = \theta =\partial_\nu \theta= q=0 , &  x\in\Gamma, t>0,\\
    u(0) =  u_0, \; u_t(0)=u_1,  \; \theta(0)=\theta_0, \;  q(0)=q_0, &  x\in\Omega.
	\end{array}\right.
\end{equation}
Let  $H= L^2(\Omega)$,   $A=\Delta^2$  and
$ \mathcal{D}(A) = \{u\in H^4(\Omega)\,|\,
u |_{ \Gamma }=   \partial_\nu u|_{\Gamma } =0\},$ where $\nu$ is the outward
unit normal vector to the boundary. If $m>0$, using \Cref{t-3-1} yields that  $(\alpha, \; \beta,\; \gamma) = (1,\,1,\,{1\over2})\in V_1$ and
the semigroup associated with system (\ref{604}) decays polynomially  with optimal  order $t^{-\frac{3}{2}}$.
When $m=0,$  by applying \Cref{m00}, the solution   decays polynomially  with optimal  order $t^{-1}$ since $ (\alpha, \; \beta ) = (1,\,1 )\in V_1^\ast.$

\section{Conclusions}\label{8.4}

In this work, we investigated the polynomial stability of abstract thermoelastic systems with Cattaneo's law. We have achieved the following key results:

\begin{itemize}[leftmargin=*]
\item For the systems with an inertial term  ($m>0$),
we systematically classified the ``non-exponential stability''  parameters region into three distinct subregions. For each subregion, we derived explicit polynomial decay rates  that are contingent on the specific choice of system parameters
$(\alpha,\beta,\gamma)$.

\item
For the systems  without an inertial term ($m=0$), we similarly partitioned the ``non-exponential stability" region into two subregions and provided polynomial decay  rates  for  both subregions.

\item
Furthermore, we conducted a detailed asymptotic spectral analysis on the system operator in the presence or absence of an inertial term. This analysis reveals the asymptotic behavior of the eigenvalues of the system operator. Moreover, it first provides us with the candidates for the growth order of the system resolvent operator on the imaginary axis in each subregion, then allows us to confirm the optimality of the aforementioned decay rates.
\end{itemize}

Upon comparing the stability outcomes between systems with and without an inertial term, it is evident that this term plays a significant influence on the stability characteristics of these coupled partial differential equation systems.

Future work will aim to investigate the stability of system \eqref{101} when parameters satisfy $(\alpha, \beta, \gamma)\in E$ and $\beta< 2\alpha-1$.
 It is worth noting that the presence of zero as a spectrum point of the system operator in this case, as mentioned in Remark \ref{R-2-1}, poses challenges when discussing the long-term behavior of this system.
Batty \etal \cite{batty} and Rozendaal \etal \cite{Rozendaal} {provide} novel methods to investigate the stability of semigroups, which could be used to help us further identify to what extent the decay rates of the system with $\beta< 2\alpha-1$ can be achieved.
Moreover, the regularity of the semigroup associated with system \eqref{101} is another interesting issue, which is worth studying in the future.

\bibliographystyle{plain}
\bibliography{reference0604}

\end{document}